\documentclass{amsart}
\usepackage{amsmath,amsthm}
\usepackage{amsfonts,amssymb}

\usepackage{enumerate}

 \usepackage{graphicx}
 \usepackage{ifpdf}
\usepackage{pdfsync}
\hfuzz1pc

\newtheorem{thm}{Theorem}[section]

\newtheorem{lem}[thm]{Lemma}
\newtheorem{prop}[thm]{Proposition}

\newtheorem{defn}[thm]{Definition}

\theoremstyle{remark}
\newtheorem{rem}{Remark}[section]

\makeatletter
\newcommand{\bddots}{%
  \mathinner{\mkern1mu\raise\p@\vbox{\kern7\p@\hbox{.}}\mkern2mu
    \raise4\p@\hbox{.}\mkern2mu\raise7\p@\hbox{.}\mkern1mu}}
\makeatother

 \def\tr{{\triangle}}

\def\f{\frac}
 
 \def\a{{\alpha}}
 \def\b{{\beta}}
 \def\g{{\gamma}}
 
 \def\t{{\theta}}
 \def\l{{\lambda}}

 \def\la{{\langle}}
 \def\ra{{\rangle}}

 \def\cb{{\mathbf c}}
 
 \def\mb{{\mathbf m}}

 \def\zb{{\mathbf z}}

 \def\Kb{{\mathbf K}}
 \def\Pb{{\mathbf P}}

 \def\CJ{{\mathcal J}}
 \def\CL{{\mathcal L}}

 \def\CM{{\mathcal M}}

 \def\CV{{\mathcal V}}
 
 \def\BB{{\mathbb B}}
 \def\CC{{\mathbb C}}
 
 \def\NN{{\mathbb N}}
 \def\PP{{\mathbb P}}
 \def\QQ{{\mathbb Q}}
 \def\RR{{\mathbb R}}
 
 \def\TT{{\mathbb T}}
 \def\UU{{\mathbb U}}
 
        \def\sspan{\operatorname{span}}
        
        \def\rank{\operatorname{rank}}
        \def\diag{\operatorname{diag}}

\def\tr{\mathsf{t}}

\newcommand{\wt}{\widetilde}
\newcommand{\wh}{\widehat}

\begin{document}

\title{Complex versus  real orthogonal polynomials of two variables}
\author{Yuan Xu}
\address{Department of Mathematics\\ University of Oregon\\
    Eugene, Oregon 97403-1222.}\email{yuan@math.uoregon.edu}
\thanks{The work was supported in part by NSF Grant DMS-1106113}

\date{\today}
\keywords{complex orthogonal polynomials, orthogonal polynomials of two variables, several variables,
complex Hermite, disk polynomials}
\subjclass[2000]{33C45, 33C50}

\begin{abstract}
Orthogonal polynomials of two real variables can often be represented in complex variables. We explore the 
connection between the two types of representations and study the structural relations of complex orthogonal
polynomials. The complex Hermite orthogonal polynomials and the disk polynomials are used as illustrating 
examples. 
\end{abstract}

\maketitle

\section{Introduction} 
\setcounter{equation}{0}

For a real valued weight function $W(x,y)$ defined on a domain $\Omega \subset \RR^2$, orthogonal polynomials 
of two variables with respect to $W$ are usually defined as polynomials that are orthogonal  with respect to the inner 
product 
\begin{equation} \label{eq:ipd}
  \la f, g \ra_{W}: = \int_\Omega f(x,y) g(x,y) W(x,y)dxdy. 
\end{equation}
Fixing an order among monomials $x^ky^j$ (say, for example, the graded lexicographical order), one can apply the
Gram-Schmidt process to generate an orthogonal polynomial basis with respect to this inner product. Structures of 
orthogonal polynomials so derived are well studied (cf. \cite{DX, Suetin}). It is known, for example, that they satisfy 
three--terms relations with respect to the total degree and, conversely, the three--term relations, 
together with mild conditions imposed on their coefficients, characterize the orthogonality of these polynomials; in other 
words, an analogue of Favard's theorem in one variable holds \cite{DX}. 

Another way of studying orthogonal polynomials of two variables is to express them in one complex variable, for which
we identify $\RR^2$ with the complex plane $\CC$ by setting $z = x + i y$ and regard $\Omega$ as a subset of $\CC$. 
We then consider polynomials in $z$ and $\bar z$ that are orthogonal with respect to the inner product
\begin{equation} \label{eq:ipdC}
   \la f, g \ra_W^\CC: = \int_\Omega f(z) \overline{g(z)} w(z)dxdy, 
\end{equation}
where $w(x+iy) = W(x,y)$ is the real weigh function. For example, the complex Hermite polynomials introduced 
in \cite{Ito} are orthogonal with respect to $e^{-|z|^2}$ on $\CC$, and the well--known Zernike polynomials or 
disk polynomials (\cite{Z, Z2}) are orthogonal with respect to $(1-|z|^2)^\l dxdy$ on the unit disk. 

The goals of this paper are two--fold. The first one is to show that the two approaches are essentially the same
and the difference between them is a matter of changing bases. For problems that do not require detail knowledge 
on individual elements of an orthogonal polynomial basis, such as reproducing kernels or convergence of orthogonal 
expansions, the two approaches give exactly the same result.
On the other hand, expressing orthogonal polynomials in complex variable can be more 
convenient, even essential, in some cases, and may result in more elegant formulas and relations. The connection between 
the two approaches is simple and has been worked out in some special cases, but it does not seem to be well--known
as can be seen from the disk polynomials and complex Hermite orthogonal polynomials. There have been continuous 
interests in these two families of polynomials, as can be seen from a number of recent papers 
(\cite{CGG, Gh1,Gh, Inti,Ismail, TSH,W} and their references); their identification to the corresponding orthogonal 
polynomials of two real variables, however, is hardly mentioned. 

Our second goal is to explore the structural relations for the complex orthogonal polynomials. Such relations, 
such as three--term relations and recursive relations, take different forms when expressed in complex variable. 
One may then ask the question of how Favard's theorem or other results that depend on the three--term relations 
can be stated in complex version. 

The space of orthogonal polynomials of a fixed degree in two variables can have many distinguished bases, some are
easier to work with than others. The study of complex Hermite polynomials and disk polynomials has demonstrated 
that orthogonal basis in complex version can possess elegant relations and formulas that could reveal hidden relations 
not easily seen in bases of real variables. In some other cases, for example, orthogonal polynomials on the domain
bounded by the deltoid curve, it is much easier to study orthogonal polynomials in complex variables. 

The paper is organized as follows. In the next section, we give a short expository on orthogonal polynomials of two 
real variables and illustrate the result using real Hermite and disk polynomials. The complex orthogonal polynomials 
are studied in Section 3 and their connection to real orthogonal polynomials is explained in Section 4. The structural 
relations of complex orthogonal polynomials are explored in Section 5.  

\section{Orthogonal polynomials of two variables} 
\setcounter{equation}{0}

We explain the basics of orthogonal polynomials of two real variables in this section. Our main reference is 
\cite{DX}.

Let $W(x,y)$ be a nonnegative weight function defined on a subset $\Omega \in \RR^2$, such that 
$\int_\Omega f^2(x) W(x,y) dxdy > 0$ for all nonzero $f \in \Pi^2:=\RR[x,y]$. Throughout this paper, we normalize 
$W$ so that $\int_\Omega W(x,y)dxdy =1$. Define the inner product $\la f, g \ra_W$ as in \eqref{eq:ipd},
$$
  \la f, g\ra_W = \int_\Omega f(x,y) g(x,y) W(x,y) dxdy,
$$
and assume that it is well defined for all polynomials. Let $\Pi_n^2$ denote the space of polynomials of degree 
at most $n$ in two real variables. A polynomial $P \in \Pi_n^2$ is called orthogonal if 
$$
    \la P, Q \ra_W = 0, \quad \hbox{for all $Q \in \Pi_{n-1}^2$},
$$
that is, $P$ is orthogonal to all polynomials of lower degrees. We denote by $\CV_n^2$ the space of 
orthogonal polynomials of degree $n$, 
$$
     \CV_n^2 := \sspan\{P \in \Pi_n^2: \la P,Q \ra_W =0, \,  \forall Q \in \Pi_{n-1}^2\}. 
$$
We sometimes write $\CV_n^2(W)$ to emphasis the dependence on $W$. It follows that
$$
     \dim \CV_n^2 = \# \{x^n, x^{n-1}y, \ldots, x y^{n-1}, y^n\} = n+1.
$$ 
A basis of $\CV_n^2$ is often denoted by $\{P_{k,n}: 0 \le k \le n\}$. If, additionally, $\la P_{k,n}, P_{j,n}\ra = 0$ for $j \ne k$, 
the basis is called a mutually orthogonal basis, and if, in further addition, $\la P_{k,n},P_{k,n}\ra =1$ for $0 \le k \le n$, 
the basis is called an orthonormal basis. A convenient notation is $\PP_n$, a column vector defined by 
$$
\PP_n  = (P_{0,n}, P_{1,n}, \ldots, P_{n,n})^\tr,
$$
where $\tr$ in the superscript denotes transpose. By definition, $\PP_n \PP_m^\tr$ is a matrix of $(n+1) \times 
(m+1)$. That $\{P_{k,n}: 0 \le k \le n\}$ is a basis of $\CV_n^2$ is equivalent to $\la \PP_n, \PP_m^\tr \ra
 = 0$ for $0 \le m \le n-1$, and it is an orthonormal basis of $\CV_n^2$ if, in addition, $\la \PP_n, \PP_n^\tr \ra$ is an 
identity matrix. 

In contrast to one variable, there could be many distinct bases for the space $\CV_n^2$. In fact, if $\PP_n$ consists of 
a basis of $\CV_n^2$, then for any non-singular matrix $M \in \RR^{n+1,n+1}$, $M \PP_n$ also consists of a basis of
$\CV_n^2$. Some bases of $\CV_n^2$, however, can be given by simpler formulas and easier to use than others. We 
illustrate this point with two examples that will also be used in the next two sections. 

\medskip\noindent
{\bf Example 2.1.}
{\it Hermite polynomials}. These are orthogonal with respect to the weight function
$$
   W_H(x,y):= \frac{1}{\pi} e^{-x^2 - y^2}, \qquad (x,y) \in \RR^2.
$$
There are several well--known orthogonal bases; we give two below. The first one is given by the product 
Hermite polynomials,
\begin{equation} \label{Hermite1}
  H_k(x) H_{n-k}(y) = (-1)^n e^{x^2+y^2} \left(\frac{\partial}{\partial x}\right)^k \left(\frac{\partial}{\partial y}\right)^{n-k} e^{-x^2 -y^2},\quad 0 \le k \le n.  
\end{equation}
This is a mutually orthogonal basis of $\CV_n^2(W_H)$. The second one is given in the polar coordinates 
$(x,y) = (r\cos \t, r \sin \t)$ with $0 \le \theta \le 2 \pi$ and $r \ge 0$, 
\begin{align} \label{Hermite2}
\begin{split}
  H_{j,n-2j}^{(1)} (x,y):= L_j^{(n-2j)}(r^2) r^{n -2j} \cos (n-2 j) \theta, \quad 0 \le j \le n/2, \\
    H_{j,n-2j}^{(2)} (x,y):=L_j^{(n-2j)}(r^2) r^{n-2j} \sin (n-2 j) \theta, \quad 0 \le j < n/2, 
\end{split}
\end{align}
where $L_j^{(\a)}$ denotes the usual Laguerre polynomial with parameter $\a$. That these are indeed polynomials of 
degree $n$ in $x$ and $y$ can be seen by $r = \sqrt{x^2+y^2}$ and writing $r^k \cos k \t = r^kT_k(\frac{x}{r})$ and 
$r^k \sin k \t = y r^{k-1} U_{k-1}(\f{x}{r})$, where $T_k$ and $U_k$ are the Chebyeshev polynomials of the first and the 
second kind, respectively. The polynomials in \eqref{Hermite2} consist of a mutually orthogonal basis of $\CV_n^2(W_H)$.  
\qed

\medskip\noindent
{\bf Example 2.2.}
{\it Orthognal polynomials}. These are orthogonal with respect to the weight function
$$
   W_\mu(x,y):= \frac{\l+1}{\pi} (1-x^2 - y^2)^\l, \qquad \l > -1, 
$$
for $(x,y) \in \BB^2:=\{(x,y): x^2+y^2 \le 1\}$. There are many well--known bases for this weight function. We give 
two bases that are in the same spirit as those in Example 2.1. The first one is given by
\begin{equation} \label{Disk1}
      (1-x^2-y^2)^{-\l} \frac{\partial^n }{\partial x^k \partial y^{n-k}} \left[
             (1-x^2-y^2)^{n+\l}\right], \quad 0\le k\le n. 
\end{equation}
This is a basis of $\CV_n^2(W_\l)$ but it is not a mutually orthogonal one. The second basis is given in polar 
coordinates $(x,y) = (r\cos \t, r \sin \t)$ with $0 \le \theta \le 2 \pi$ and $r \ge 0$, 
\begin{align} \label{Disk2}
\begin{split} 
       P_{j}^{(\l,n-2j)}(2r^2 -1)r^{n-2j}\cos (n-2j)\t, \quad 0 \le j \le n/2,  \\
       P_{j}^{(\l,n-2j)}(2r^2 -1)r^{n-2j}\sin (n-2j) \t,  \quad 0 \le j < n/2. 
\end{split}
\end{align}
The polynomials in \eqref{Disk2} consist of a mutually orthogonal basis of $\CV_n^2(W_\l)$. \qed
\medskip

Our definition of orthogonality can be extended to a positive definite linear functional $\CL$ defined on $\Pi^d$, which
satisfies $\CL(p^2) > 0$ whenever $p \in \Pi^d$ and $p \ne 0$. Given such a linear functional, we can define an inner
product $\la f, g \ra = \CL(f g)$, which allows us to consider orthogonal polynomials with respect to $\CL$. If the linear
functional is given by $\CL(f) = \int_\Omega f(x,y) W(x,y) dxdy$, we are back to orthogonal with respect to $W$. 

There is an analog of three--term relations for orthogonal polynomials in two variables, given in terms of $\PP_n$,
which has the simplest form for orthonormal polynomials, and it in fact characterizes the orthogonality in the sense
of Favard's theorem. Let $\CM(n,m)$ denote the set of real matrices of size $n\times m$.  

\begin{thm} \label{thm:nFavard}
Let $\{\PP_n\}_{n=0}^\infty = \{P_{k,n}: 0 \le k \le n, n\in \NN_0\}$, $\PP_0 =1$, be an arbitrary sequence in $\Pi^2$. Then the following statements are equivalent.
\item{}\hspace{.1in} (1).  There exists a positive definite linear functional $\CL$ on $\Pi^2$ which 
makes $\{\PP_n\}_{n=0}^\infty$ an orthonormal basis in $\Pi^d$.
\item{}\hspace{.1in} (2). For $n \ge 0$, $1\le i\le d$, there exist matrices $A_{n,i} \in \CM(n+1, n+2)$ and
$B_{n,i}\in \CM(n+1,n+1)$ such that
\begin{align} \label{three-term}
 \begin{split}
   x \PP_n(x,y) & = A_{n,1} \PP_{n+1}(x,y) + B_{n,1} \PP_n(x,y) + A_{n-1,1}^\tr \PP_{n-1}(x,y), \\
   y \PP_n(x,y) & = A_{n,2} \PP_{n+1}(x,y) + B_{n,2} \PP_n(x,y) + A_{n-1,2}^\tr \PP_{n-1}(x,y).
\end{split}
\end{align}
and the matrices in the relation satisfy the rank condition 
$$
   \rank A_{n,1} = \rank A_{n,2} = n+1 \quad \hbox{and} \quad \rank \left [ \begin{matrix} A_{n,1} \\ A_{n,2} \end{matrix} \right]=n+2.
$$
\end{thm}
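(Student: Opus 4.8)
The plan is to prove the two implications separately, following the strategy of the multivariate Favard theorem.

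\smallskip
\noindent\textbf{Direction $(1)\Rightarrow(2)$.} Assume a positive definite $\CL$ makes $\{\PP_n\}$ orthonormal, with $\la f,g\ra = \CL(fg)$. Since $x\PP_n$ has degree $n+1$, it lies in $\bigoplus_{m\le n+1}\CV_m^2$, so I would expand $x\PP_n = \sum_m \la x\PP_n,\PP_m^\tr\ra \PP_m$. The self-adjointness of multiplication by $x$, namely $\la x\PP_n,\PP_m^\tr\ra = \la\PP_n,(x\PP_m)^\tr\ra$, forces this coefficient to vanish unless $|n-m|\le 1$, because $x\PP_m$ has degree $m+1$ and is orthogonal to $\PP_n$ when $m+1<n$. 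Setting $A_{n,1}:=\la x\PP_n,\PP_{n+1}^\tr\ra$ and $B_{n,1}:=\la x\PP_n,\PP_n^\tr\ra$, the remaining term is $\la x\PP_n,\PP_{n-1}^\tr\ra = (\la x\PP_{n-1},\PP_n^\tr\ra)^\tr = A_{n-1,1}^\tr$, and similarly for $y$; this is exactly \eqref{three-term}. For the rank conditions I would compare highest-degree terms: writing $\PP_n = G_n\xb_n + (\text{lower})$ with $\xb_n=(x^n,\dots,y^n)^\tr$ and $G_n\in\CM(n+1,n+1)$ invertible (invertibility being equivalent to $\PP_n$ forming a genuine degree-$n$ basis of $\CV_n^2$), the fixed shift identities $x\xb_n = L_{n,1}\xb_{n+1}$ and $y\xb_n=L_{n,2}\xb_{n+1}$ give $G_nL_{n,i}=A_{n,i}G_{n+1}$. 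Since $L_{n,1},L_{n,2}$ are full-row-rank $0$--$1$ matrices whose stack has rank $n+2$, these ranks transfer to $A_{n,1},A_{n,2}$.

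\smallskip
\noindent\textbf{Direction $(2)\Rightarrow(1)$.} This is the substantive one. Given genuine polynomials $\PP_n$ satisfying \eqref{three-term} with the rank conditions, I would first show that $\{P_{k,n}:0\le k\le n,\ n\ge0\}$ is a basis of $\Pi^2$. The combined condition $\rank[A_{n,1};A_{n,2}]^\tr=n+2$ is what guarantees, inductively, that the entries of $\PP_{n+1}$ are linearly independent of exact degree $n+1$: the two relations recover $x\PP_n$ and $y\PP_n$ from $\PP_{n+1},\PP_n,\PP_{n-1}$, so the components of $\PP_{n+1}$ must fill out a full $(n+2)$-dimensional complement in degree $n+1$. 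With this basis in hand I would define a bilinear form on $\Pi^2$ by declaring $\la P_{k,n},P_{j,m}\ra=\delta_{nm}\delta_{kj}$ and set $\CL(f):=\la f,1\ra$; by construction this form is positive definite, so $\CL(p^2)=\la p,p\ra>0$ for $p\ne0$.

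\smallskip
The crux is to verify that this abstract form actually has the shape $\la f,g\ra=\CL(fg)$, equivalently that multiplication by $x$ and by $y$ is self-adjoint for $\la\cdot,\cdot\ra$. This is precisely where the transpose structure $A_{n-1,i}^\tr$ in \eqref{three-term} is used: substituting the three-term relations into $\la x\PP_n,\PP_m^\tr\ra$ and into $\la\PP_n,(x\PP_m)^\tr\ra$ and invoking orthonormality, both sides collapse to the same matrices $A_{n,1},B_{n,1},A_{n-1,1}^\tr$, giving $\la xf,g\ra=\la f,xg\ra$ for all $f,g$, and likewise for $y$. Iterating yields $\la x^ay^b f,g\ra=\la f,x^ay^bg\ra$, hence $\la fg,h\ra=\la f,gh\ra$ by bilinearity; taking $h=1$ gives $\la f,g\ra=\la fg,1\ra=\CL(fg)$, so $\{\PP_n\}$ is orthonormal with respect to the positive definite functional $\CL$. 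I expect the main obstacle to be the bookkeeping of the degree-and-independence induction, making it genuinely rest on the combined rank condition rather than on the two individual ranks; a secondary point worth flagging is that, because the $\PP_n$ are bona fide polynomials, the identity $x(y\PP_n)=y(x\PP_n)$ holds automatically, which is exactly what allows the self-adjointness of $x$ and $y$ to coexist without imposing any extra compatibility (commutativity) hypotheses on the coefficient matrices.
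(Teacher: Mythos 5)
Your direction $(1)\Rightarrow(2)$ is sound: expanding $x\PP_n$ in the orthonormal basis, killing the far coefficients by degree, identifying the $\PP_{n-1}$-coefficient as $A_{n-1,1}^\tr$, and transferring the rank conditions through the invertible leading-coefficient matrices via $G_nL_{n,i}=A_{n,i}G_{n+1}$ all work; so does your basis-extraction step at the start of $(2)\Rightarrow(1)$. (For reference, the paper itself gives no proof of this theorem --- it quotes it from the Dunkl--Xu book --- so the comparison here is with the standard proof given there.)

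The genuine gap is in what you call the crux of $(2)\Rightarrow(1)$. Having defined the form by fiat, $\la P_{k,n},P_{j,m}\ra=\delta_{nm}\delta_{kj}$, you claim that substituting the three-term relations into $\la x\PP_n,\PP_m^\tr\ra$ and into $\la \PP_n,(x\PP_m)^\tr\ra$ makes ``both sides collapse to the same matrices.'' That is false in the diagonal case $m=n$: the first collapses to $B_{n,1}$, the second to $B_{n,1}^\tr$, so your self-adjointness $\la xf,g\ra=\la f,xg\ra$ holds if and only if every $B_{n,i}$ is symmetric. Symmetry of $B_{n,i}$ is not among the hypotheses of (2), and you cannot assume it: it is itself a consequence of the theorem (if $\CL$ exists, $B_{n,i}=\CL(x_i\PP_n\PP_n^\tr)$ is symmetric), so invoking it begs the question; nor is it derivable ``locally'' from a few levels of the relations --- for instance $\PP_0=1$, $\PP_1=(x,y)^\tr$, $\PP_2=(x^2+y-1,\,xy,\,y^2-1)^\tr$ satisfies the relations and rank conditions at levels $n=0,1$ with $B_{1,1}=\left[\begin{smallmatrix}0&-1\\0&0\end{smallmatrix}\right]$ nonsymmetric, and only the impossibility of continuing to $\PP_3$ rules it out. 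The standard proof avoids this trap by inverting your logic so that self-adjointness never needs to be checked: after establishing that $\{P_{k,n}\}$ is a basis of $\Pi^2$, define $\CL$ directly on that basis by $\CL(1)=1$ and $\CL(P_{k,n})=0$ for $n\ge1$, so that $\la f,g\ra:=\CL(fg)$ is multiplicative by construction, and then prove orthonormality $\CL(\PP_m\PP_n^\tr)=\delta_{mn}I$ by induction: writing $\CL(x_i\PP_m\PP_n^\tr)$ two ways via the three-term relations yields $A_{m,i}\,\CL(\PP_{m+1}\PP_n^\tr)=0$ for $n>m+1$ and $A_{m,i}\bigl(\CL(\PP_{m+1}\PP_{m+1}^\tr)-I\bigr)=0$ for $n=m+1$, and stacking $i=1,2$ and using that $\rank\left[\begin{matrix}A_{m,1}\\ A_{m,2}\end{matrix}\right]=m+2$ (full column rank) completes the induction; symmetry of $B_{n,i}$ then falls out a posteriori. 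Your outline needs to be restructured along these lines, not merely patched.
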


If $\{P_{k,n}: 0 \le k \le n\}$ is a basis of $\CV_n^2$, then the matrix $H_n:=\la \PP_n, \PP_n^\tr\ra$ is positive
definite. It follows that $\wt \PP_n = H_n^{-1/2} \PP_n$ consists of an orthonormal basis of $\CV_n^d$. If $H_n$ is 
an identity matrix, then $\PP_n$ consists of an orthonormal basis of $\CV_n^2$. The three-term relations and Favard's 
theorem can be stated for non-orthonormal bases, for which $A_{n-1,i}^t$ in \eqref{three-term} needs to be replaced by
$C_{n,i} : = H_n A_{n-1,i}^\tr H_{n-1}^{-\tr}$. 

The weight function $W$ is called centrally symmetric if $W(x) = W(-x)$ and $-x \in \Omega$ whenever $x \in \Omega$. 
If $W$ is centrally symmetric, then it is known that  $B_{n,i} = 0$ in the three--term relations \eqref{three-term}.

Given an orthonormal basis $\{P_{k,n}: 0 \le k \le n, n =0,1,2,\ldots\}$, the reproducing kernels $\Pb_n(\cdot,\cdot)$ and 
$\Kb_n(\cdot,\cdot)$ of $\CV_n^2(W)$ and $\Pi_n^2$, respectively, in $L^2(W)$ are defined by 
\begin{equation}\label{kernelP}
  \Pb_m((x,y), (u,v)) :=  \PP_m^\tr (x,y)\PP_m(u,v) = \sum_{k=0}^m P_{k,m}(x,y) P_{k,m} (u,v) 
\end{equation}
and 
\begin{equation}\label{reprod-kernel}
      \Kb_n((x,y),(u,v)): = \sum_{m=0}^n \Pb_m((x,y), (u,v)).
\end{equation}
The kernel $\Kb_n(\cdot,\cdot)$ plays an essential role in the study of Fourier orthogonal expansions. 
It satisfies an analog of the Christoffel--Darboux formula: with $x = (x_1,x_2)$ and $y = (y_1,y_2)$, 
\begin{equation} \label{CDformula} 
   \Kb_n(x,y) = \frac{\bigl[ A_{n,i} \PP_{n+1}(x)\bigr]^\tr \, \PP_{n} (y) -  
  \PP_{n}^\tr(x) \bigl[ A_{n,i} \PP_{n+1} (y)\bigr]} {x_i - y_i}, \quad i=1,2. 
\end{equation}
Notice that the right hand side depends on $i$ where the left hand side does not. 

\section{Orthogonal polynomials in complex variables} 
\setcounter{equation}{0}

Let $W(x,y)$ be defined as in the previous section. For $z \in \CC$ we write $z = x + i y$ and consider 
$\Omega$ a subset of $\CC$. Define the weight function $w(z)$ by 
$$
       w(z) = w(x+iy): = W(x,y), \quad z \in \Omega,  
$$
which is a real function. Let $m_{k,j}$ denote the moment of $w(z)$ defined by 
\begin{equation}\label{moments}
   m_{k,j} = \int_\Omega z^k \bar{z}^j w(z) dz, \qquad j,k = 0, 1, 2, \ldots.
\end{equation}
It follows directly from the definition that $m_{k,j} = \overline{m_{j,k}}$. This shows that we need to treat 
$z$ and $\bar z$ separately, so that our polynomials are really functions in $z$ and $\bar z$. Thus, the 
complex inner product defined in \eqref{eq:ipdC} should be written as  
$$
  \la f, g \ra_W^\CC  :=  \int_\Omega f(z,\bar z) \overline {g(z,\bar z)} w(z) dxdy
$$
for polynomials of two variables in $z$ and $\bar z$. Let $\Pi^2(\CC):=\{P(z,\bar z): P \in \Pi^2\}$ and,
for $n \in \NN_0$, let $\Pi_n^2(\CC) := \{P(z,\bar z): P \in \Pi_n^2\}$. 

With respect to this inner product, a polynomial $P \in \Pi_n^2(\CC)$ is called orthogonal of degree $n$ if 
$\la P, Q\ra_W^\CC = 0$, for all $Q \in \Pi_{n-1}^2$. We denote by $\CV_n^2(W,\CC)$ the space of orthogonal 
polynomials of degree $n$ with respect to $\la \cdot,\cdot \ra_W^\CC$,  
$$
   \CV_n^2(\CC) := \{P \in \Pi_n^2(\CC): \la P, Q \ra_W^\CC =0, \, \forall Q \in \Pi_{n-1}^2(\CC)\}.
$$
We sometimes write $\CV_n^2(W,\CC)$ to emphasis the dependence on $W$. It follows that $\dim \CV_n^2(\CC) = n +1$. 

To distinguish between $\CV_n^2$ and $\CV_n^2(\CC)$, we reserve $\{P_{k,n}: 0 \le k \le n\}$ for a basis of $\CV_n^2$ 
and $\{Q_{k,n}: 0 \le k \le n\}$ for a basis of $\CV_n^2(\CC)$. If $\la Q_{k,n}, Q_{j,n}\ra = 0$ for $j \ne k$, the basis is called 
a mutually orthogonal basis, and if, in addition, $\la Q_{k,n},Q_{j,n}\ra =1$ for $0 \le k \le n$, the basis is called orthonormal.
We shall also use the notation $\QQ_n$ defined by 
$$
\QQ_n := (Q_{0,n}, Q_{1,n}, \ldots, Q_{n,n})^\tr.
$$
As in the case of $\PP_n$, that $\{Q_{k,n}: 0 \le k \le n\}$ is an orthogonal basis of $\CV_n^2(\CC)$ is equivalent to 
$\la \QQ_n, \QQ_m^\tr\ra_W^\CC = 0$ for $0 \le m \le n-1$ and $\QQ_n$ consists of an orthonormal basis if 
$\la \QQ_n, \QQ_n^\tr \ra_W^\CC$ is an identity matrix in addition. 

As it is in the case of real orthogonal polynomials, if $\QQ_n$ consists of a basis of $\CV_n^2(\CC)$, then so does 
$M \QQ_n$ for any nonsingular $(n+1)\times (n+1)$ matrix $M$. 

Given an orthonormal basis $\{Q_{k,n}: 0 \le k \le n, n =0,1,2,\ldots\}$, 
the reproducing kernels $\Pb_n^\CC(\cdot,\cdot)$ and $\Kb_n^\CC(\cdot,\cdot)$ of  $\CV_n^2(W,\CC)$ and
$\Pi_n^2(\CC)$, respectively, in $L^2(W)$ are defined by
\begin{equation}\label{kernelPC}
  \Pb_m^C(z, \zeta) := \QQ_m^\tr (z, \bar z)\overline{\QQ_m(\zeta, \bar \zeta)} 
      = \sum_{k=0}^m Q_{k,m}(z,\bar z) \overline{Q_{k,m} (\zeta, \bar \zeta)},
\end{equation}
and 
\begin{equation}\label{reprod-kernelC}
   \Kb_n^\CC(z, \zeta): = \sum_{m=0}^n \Pb_n^\CC(z, \zeta).
\end{equation}

Complex orthogonal polynomials can be directly constructed from the moments $m_{j,k}$, just like their counterpart 
in real variables \cite[Section 3.2]{DX}. For convenience, we define a column vector 
$$
    \zb^n = (z^n, z^{n-1} \bar z, \ldots, {\bar z}^{n-1}z, {\bar z}^n )^\tr, 
$$
and for $k, j \in \NN_0$, define the matrix $\mb_{\{k\},\{j\}}$ of size $(k+1) \times (j+1)$ by 
$$
    \mb_{\{k\},\{j\}} :=  = \la \zb^k, (\zb^j)^\tr\ra_{W}^\CC = \int_\Omega \zb^k  (\zb^j)^* w(z) dx dy.
$$
For $n \in \NN_0$, define the moment matrix of size $N \times N$ with $N= \binom{n+2}{2}$ by 
$$
   M_n = \left[ \mb_{\{k\},\{j\}}  \right]_{k,j=0}^n.
$$
Let $I_n$ denote the $n\times n$ identity matrix and $J_n$ denote the $n \times n$ backward identity, 
$$
     I_n = \left[ \begin{matrix} 
          1 &  & \bigcirc \\   &  \ddots &  \\ \bigcirc &  & 1 
            \end{matrix} \right] \quad {and} \quad   J_n = \left[ \begin{matrix} 
         \bigcirc  &  & 1 \\   &  \bddots &  \\   1 &  & \bigcirc 
            \end{matrix} \right].  
$$

\begin{lem}
For each $n = 0,1,2, \ldots$, the matrix $M_n$ is positive definite. Furthermore, $M_n$ satisfies
\begin{equation}\label{M=JMJ}
   M_n = \left [ \begin{matrix} J_1 &  & \bigcirc \\ & \ddots & \\ \bigcirc & & J_{n+1}  \end{matrix}\right]
       \overline{M_n} \left [ \begin{matrix} J_1 &  & \bigcirc \\ & \ddots & \\ \bigcirc & & J_{n+1}  \end{matrix}\right].      
\end{equation}
\end{lem}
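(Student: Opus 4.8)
The plan is to collapse the block moment matrix $M_n$ into a single Gram matrix and then read off both assertions from that description. Stacking the blocks $\zb^0,\zb^1,\ldots,\zb^n$ into one column vector $\mathbf{Z}_n$ of length $N=\binom{n+2}{2}$, whose entries are exactly the monomials $z^p\bar z^q$ with $p+q\le n$, the definition of $\mb_{\{k\},\{j\}}$ gives
\[
   M_n = \la \mathbf{Z}_n, \mathbf{Z}_n^\tr\ra_W^\CC = \int_\Omega \mathbf{Z}_n \,(\mathbf{Z}_n)^* \, w(z)\, dx\,dy .
\]
First I would establish positive definiteness in the standard Gram-matrix way: for $\cb\in\CC^N$ one has $\cb^* M_n \cb = \int_\Omega |(\mathbf{Z}_n)^*\cb|^2\, w(z)\, dx\,dy \ge 0$, and since $w=W$ satisfies the nondegeneracy hypothesis, equality forces the polynomial $(\mathbf{Z}_n)^*\cb$ to vanish on $\Omega$. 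Because the monomials $\{z^p\bar z^q: p+q\le n\}$ form a basis of $\Pi_n^2(\CC)$ (equivalently of $\Pi_n^2$ under $z=x+iy$), they are linearly independent, so $\cb=0$ and $M_n$ is positive definite.

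The relation \eqref{M=JMJ} rests on a single observation about $\zb^k$: conjugation sends $z^{k-a}\bar z^a$ to $z^a\bar z^{k-a}$, i.e.\ it reverses the order of the entries of $\zb^k$, so $\overline{\zb^k}=J_{k+1}\zb^k$. Stacking over $0\le k\le n$ yields $\overline{\mathbf{Z}_n}=\CJ_n\mathbf{Z}_n$, where $\CJ_n:=\diag(J_1,\ldots,J_{n+1})$ is the block-diagonal matrix in \eqref{M=JMJ}; since $\CJ_n^2$ is the identity this also reads $\mathbf{Z}_n=\CJ_n\overline{\mathbf{Z}_n}$. Substituting into the Gram representation and using that $\CJ_n$ is real and symmetric and that $w$ is real, I would compute
\[
   M_n = \int_\Omega (\CJ_n\overline{\mathbf{Z}_n})(\CJ_n\overline{\mathbf{Z}_n})^* \, w\, dx\,dy
       = \CJ_n \Big(\int_\Omega \overline{\mathbf{Z}_n}\,\mathbf{Z}_n^\tr\, w\, dx\,dy\Big)\CJ_n
       = \CJ_n\, \overline{M_n}\, \CJ_n ,
\]
the last step because the $(p,q)$ entry of $\int_\Omega \overline{\mathbf{Z}_n}\,\mathbf{Z}_n^\tr\, w$ is the complex conjugate of $(M_n)_{p,q}$.

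I expect no genuine obstacle; the whole content is that \eqref{M=JMJ} is the matrix encoding of $m_{k,j}=\overline{m_{j,k}}$, and the only thing to get right is the index bookkeeping, namely that the backward identity $J_{k+1}$ implements conjugation on $\zb^k$, and that the adjoint $(\,\cdot\,)^*$ rather than the transpose enters the blocks $\mb_{\{k\},\{j\}}$. As a cross-check I would also verify \eqref{M=JMJ} blockwise, i.e.\ $\mb_{\{k\},\{j\}}=J_{k+1}\overline{\mb_{\{k\},\{j\}}}J_{j+1}$: the $(a,b)$ entry of $\mb_{\{k\},\{j\}}$ is $m_{k-a+b,\,a+j-b}$, and the substitution $a\mapsto k-a$, $b\mapsto j-b$ combined with $m_{p,q}=\overline{m_{q,p}}$ reproduces that same entry, confirming the identity directly.
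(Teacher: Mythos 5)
Your proof is correct and follows essentially the same route as the paper: positive definiteness via the Gram-matrix representation $\cb M_n \cb^* = \int_\Omega |\cb\,\mathbf{Z}_n|^2 w\,dxdy$ together with the nondegeneracy of $W$, and the identity \eqref{M=JMJ} via the observation $\zb^k = J_{k+1}\overline{\zb^k}$ applied blockwise to $\mb_{\{k\},\{j\}}$. Your stacked-vector packaging and the explicit entrywise cross-check ($m_{k,j}=\overline{m_{j,k}}$ with the index reversal $a\mapsto k-a$, $b\mapsto j-b$) only spell out details the paper leaves implicit.
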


\begin{proof}
Let $\cb \in \CC^N$ be a row vector. We can write $\cb = (\cb_0,\cb_1, \ldots, \cb_n)$ with 
$\cb_k \in \CC^{k+1}$ as row vectors. Then 
$$
    \cb M_n \cb^* = \sum_{k=0}^n\sum_{j=0}^n \cb_j  \mb_{\{k\},\{j\}} \cb_j^* =  \int_{\Omega}
        \left |\sum_{k =0}^n \cb_k \zb_k \right|^2 w(z) dxdy \ge 0, 
$$
and equality holds only if $\cb = 0$, since $W$ satisfies $\int_\Omega p^2(x) W(x,y) dxdy > 0$ for all nonzero $p \in \Pi_n^2$.
Hence, $M_n$ is positive definite. 

It follows directly form the definition that $\zb^n = J_{n+1} \overline{\zb^n}$, which implies that 
$\mb_{\{k\},\{j\}} = J_{k+1} \overline{\mb_{\{k\},\{j\}}} J_{j+1}$ and, consequently, the identity \eqref{M=JMJ}.
\end{proof}

For $0 \le k \le n$ and $j \in \NN$, we define the column vector $  \mb_{k,\{j\}}^n $ in $\CC^{j+1}$ by
$$
   \mb_{\{j\},k}^n := \int_\Omega \zb^j   z^{n-k} {\bar z}^k w(z) dxdy,
$$ 
and use it to define,  for each $k$, a matrix $M_{k,n}(z,\bar z)$ by 
$$
   M_{k,n}(z, \bar z) := \left[ \begin{array}{c|c}
  M_{n-1} & \begin{matrix}  {\mb_{\{0\},k}^n} \\  { \mb_{\{1\},k} }\\ \vdots \\ { \mb_{\{n-1\},k}^n} \end{matrix}\\ 
  \hline 1, \zb^*, \hdots, (\zb^{n-1})^* & z^{n-k} {\bar z}^k
      \end{array} \right].
$$
Now, for $0 \le k \le n$, we define monic polynomials $Q_{k,n} \in \Pi_n^2(\CC)$ by
\begin{equation} \label{eq:MonoQ}
Q_{k,n} (z,\bar z): = \frac{ \det M_{k,n}(z, \bar z) }{ \det M_{n-1}} =   z^{n-k} \bar z^k + R_{k,n-1}(z,\bar z),
\end{equation}
where  $R_{k, n-1} \in \Pi_{n-1}^2(\CC)$ as expanding the determinant in nominator shows. 

\begin{prop}
For $0 \le k \le n$, $Q_{k,n}$ are orthogonal polynomials in $\CV_n^2(W,\CC)$ and satisfy
\begin{equation} \label{eq:Qconjugate}
    Q_{k,n} (z,\bar z) = \overline{Q_{n-k,n}(z,\bar z)}, \quad 0 \le k \le n. 
\end{equation}
\end{prop}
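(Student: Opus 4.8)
The plan is to prove the two assertions separately: the orthogonality directly from the determinantal definition \eqref{eq:MonoQ}, and the reflection identity \eqref{eq:Qconjugate} from a uniqueness argument for monic orthogonal polynomials.

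For the orthogonality, since $\{z^a\bar z^b : a+b\le n-1\}$ spans $\Pi_{n-1}^2(\CC)$ and $\la\,\cdot\,,\,\cdot\,\ra_W^\CC$ is conjugate linear in its second slot, it suffices to verify $\la Q_{k,n}, z^a\bar z^b\ra_W^\CC = 0$ for every $a+b\le n-1$. The determinant $\det M_{k,n}(z,\bar z)$ is linear in its bottom row $(1, \zb^*, \ldots, (\zb^{n-1})^*, z^{n-k}\bar z^k)$, the only row carrying the variables, so I would pull the integral $\int_\Omega (\,\cdot\,)\overline{z^a\bar z^b}\,w\,dxdy$ inside the determinant and replace that bottom row by its moments. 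A short computation using $m_{p,q}=\overline{m_{q,p}}$ shows that the resulting row of moments is exactly the row of the augmented top block $[\,M_{n-1}\,|\,(\mb_{\{\cdot\},k}^{n})\,]$ indexed by the monomial $z^b\bar z^a = \overline{z^a\bar z^b}$; because $a+b\le n-1$ this row already occurs among the top rows, so the determinant has two equal rows and vanishes. Dividing by $\det M_{n-1}>0$ gives $\la Q_{k,n},z^a\bar z^b\ra_W^\CC=0$, hence $Q_{k,n}\in\CV_n^2(W,\CC)$.

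For \eqref{eq:Qconjugate}, I would use that a polynomial in $\CV_n^2(W,\CC)$ whose degree-$n$ part is a prescribed single monomial is unique. First, since $w$ is real and $\Pi_{n-1}^2(\CC)$ is closed under complex conjugation (conjugation merely swaps $z\leftrightarrow\bar z$), for any $Q\in\Pi_{n-1}^2(\CC)$ one has $\la \overline{Q_{n-k,n}}, Q\ra_W^\CC = \overline{\la Q_{n-k,n}, \overline Q\ra_W^\CC}=0$, because $\overline Q\in\Pi_{n-1}^2(\CC)$ and $Q_{n-k,n}\perp\Pi_{n-1}^2(\CC)$; thus $\overline{Q_{n-k,n}}\in\CV_n^2(W,\CC)$. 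Next I would read off from \eqref{eq:MonoQ} that the only degree-$n$ term of $Q_{k,n}$ is $z^{n-k}\bar z^k$, while that of $\overline{Q_{n-k,n}}$ is $\overline{z^{k}\bar z^{n-k}}=z^{n-k}\bar z^k$ as well. Hence the difference $D:=Q_{k,n}-\overline{Q_{n-k,n}}$ lies in $\Pi_{n-1}^2(\CC)$ yet is orthogonal to all of $\Pi_{n-1}^2(\CC)$; evaluating $\la D,D\ra_W^\CC=0$ and invoking positive definiteness of the inner product forces $D=0$, which is \eqref{eq:Qconjugate}.

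The conceptual content is light; the error-prone step is the bookkeeping in the orthogonality argument, namely matching the integrated bottom row to the correct row of $M_{n-1}$ while keeping the conjugations and the index swap $(a,b)\mapsto(b,a)$ straight. As an alternative route to \eqref{eq:Qconjugate}, the reflection identity can be obtained matrix-theoretically from the symmetry \eqref{M=JMJ} by tracking how the backward identities $J_{k+1}$ act on the rows and columns of $M_{k,n}(z,\bar z)$; I would keep the uniqueness argument as the primary proof, since it avoids these determinant manipulations, and use the matrix identity only as a cross-check.
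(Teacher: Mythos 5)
Your proof is correct, and it splits into a half that matches the paper and a half that does not. The orthogonality argument coincides with the paper's: both of you integrate the determinantal formula \eqref{eq:MonoQ} against a conjugated monomial of total degree $p\le n-1$, push the integral into the bottom row (the only row carrying $z,\bar z$), and observe that the integrated row reproduces the row of the top block $[\,M_{n-1}\mid \mb_{\{p\},k}^n\,]$ indexed by the matching monomial, so the determinant has two equal rows and vanishes; your $(a,b)\mapsto(b,a)$ bookkeeping is exactly the paper's computation with $\overline{z^j\bar z^{\,p-j}}$. For the reflection identity \eqref{eq:Qconjugate}, however, you genuinely depart from the paper: the paper's proof is precisely the ``matrix-theoretic cross-check'' you relegate to a footnote, namely it uses $\zb^m = J_{m+1}\overline{\zb^m}$ together with \eqref{M=JMJ} to obtain $M_{k,n}(z,\bar z) = \diag(J_1,\ldots,J_n,1)\,\overline{M_{n-k,n}(z,\bar z)}\,\diag(J_1,\ldots,J_n,1)$, then takes determinants (the two $\pm1$ factors cancel in pairs) and divides by the real, positive $\det M_{n-1}$. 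Your primary route instead shows $\overline{Q_{n-k,n}}\in\CV_n^2(W,\CC)$ by conjugating the inner product (legitimate because $w$ is real and $\Pi_{n-1}^2(\CC)$ is stable under conjugation), notes that $Q_{k,n}$ and $\overline{Q_{n-k,n}}$ have the same degree-$n$ part $z^{n-k}\bar z^k$ by \eqref{eq:MonoQ}, and kills the difference $D:=Q_{k,n}-\overline{Q_{n-k,n}}$ via $\la D,D\ra_W^\CC=0$. This uniqueness argument is cleaner and more conceptual: it isolates the two actual reasons the identity holds (reality of $w$ and the monic normalization) and avoids all $J$-matrix bookkeeping; its only extra input is positive definiteness of $\la\cdot,\cdot\ra_W^\CC$ on $\Pi_{n-1}^2(\CC)$, which is available since for $P=f+ig$ with $f,g$ real one has $\la P,P\ra_W^\CC=\int_\Omega (f^2+g^2)W\,dxdy>0$ for $P\ne 0$ by the standing assumption on $W$ (equivalently, by the paper's Lemma that $M_n$ is positive definite). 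What the paper's computation buys in exchange is the explicit conjugation relation between $M_{k,n}$ and $M_{n-k,n}$, a slightly stronger statement than \eqref{eq:Qconjugate} itself.
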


\begin{proof}
For $ 0 \le j \le p < n$, computing $\int_\Omega Q_{k,n}(z,\bar z) \overline{z^j {\bar z}^{p-j}}w(z) dxdy$ shows that the
integral applies to the last row of the determinant in the nominator of $Q_{k,n}$, which becomes 
$$
 \int_\Omega  z^{p-j} {\bar z}^j (1, \zb^*, \hdots (\zb^{n-1})^*, z^{n-k} {\bar z}^k) w(z) dxdy.
$$
It follows that the first $\binom{n+1}2$ elements of this vector is the $j$-th row of the $p$-th block rows indexed 
by $\{p\}$ of $M_{n-1}$ and the last element is the $j$-th element of $\mb_{\{p\},k}^n$. Consequently, the 
determinant of the integral of $M_{k,n}(z,\bar z) \overline{z^j {\bar z}^{p-j}}$ has two identical rows and its value is zero. 
This proves that $Q_{k,n} \in \CV_n^2(W,\CC)$. 

Since $\zb^m = J_{m+1} \overline{\zb^m}$ and $\mb_{\{j\},k}^n = J_{j+1} \overline{\mb_{\{j\},k}^n}$, it is not difficult 
to verify, using \eqref{M=JMJ}, that 
\begin{equation*}
  M_{k,n}(z,\bar z) = \left [ \begin{matrix} J_1 &  & & \bigcirc \\ & \ddots & & \\   & & J_{n} & \\ \bigcirc & & & 1 \end{matrix}\right]
       \overline{M_{n-k,n}(z,\bar z) }\left [ \begin{matrix} J_1 &  & & \bigcirc \\ & \ddots & & \\   & & J_{n} & \\ \bigcirc & & & 1 \end{matrix}\right],      
\end{equation*}
which implies immediately the identity \eqref{eq:Qconjugate}.
\end{proof}

In terms of the column vector $\QQ_n$, the identities in \eqref{eq:Qconjugate} are equivalent to 
\begin{equation} \label{eq:Qconjugate2}
       \QQ_n (z,\bar z) = J_{n+1} \overline{\QQ_{n} (z,\bar z)}.
\end{equation}

Let $H_n := \int_\Omega \QQ_n(z,\bar z) (\QQ_n(z,\bar z))^* w(z) dxdy$. Then $H_n$ is a positive definite Hermitian
matrix. In particular, $H_n$ has positive real eigenvalues and there is an unitary matrix $S_n$ such that 
$$
  H_n = S_n \Lambda_n S_n^*, \qquad \Lambda_n = \diag (\l_0,\ldots, \l_{n}), 
$$
where $\lambda_0,\ldots, \l_{n}$ are the eigenvalues of $H_n$. Since all $\l_i >  0$, we can define the square root 
of $H_n$ by $H_n^{\pm \f 12}: = S_n \Lambda^{\pm \f12} S_n^*$, where 
$\Lambda_n^{\pm \f12} = \diag (\l_0^{\pm \f12},\ldots, \l_n^{\pm \f12}\,)$.

\begin{prop}
Let $Q_{k,n}$ be defined by \eqref{eq:MonoQ}. Define $\{Q_{k,n}': 0 \le k \le n\}$ by 
$$
   \QQ_n'(z,\bar z) = H_n^{-\f12} \QQ_n(z,\bar z).
$$
Then $\{Q_{k,n}': 0 \le k \le n\}$ is an orthonormal basis of $\CV_n^2(W,\CC)$ that satisfies \eqref{eq:Qconjugate}.
\end{prop}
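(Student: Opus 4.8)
The plan is to verify two things: that $\{Q_{k,n}'\}$ is an orthonormal basis of $\CV_n^2(W,\CC)$, and that it inherits the conjugation symmetry \eqref{eq:Qconjugate}. Since $H_n^{-1/2}$ is nonsingular, $\QQ_n' = H_n^{-1/2}\QQ_n$ is automatically a basis of $\CV_n^2(W,\CC)$ by the remark that $M\QQ_n$ is a basis whenever $M$ is nonsingular. For orthonormality I would simply compute the Gram matrix: since $H_n = \int_\Omega \QQ_n \QQ_n^* w\, dxdy$ and $H_n^{-1/2}$ is Hermitian, one has $\int_\Omega \QQ_n'(\QQ_n')^* w\, dxdy = H_n^{-1/2} H_n (H_n^{-1/2})^* = H_n^{-1/2}H_n H_n^{-1/2} = I_{n+1}$, which is exactly the orthonormality of $\{Q_{k,n}'\}$.

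The substantive part is the conjugation relation. By \eqref{eq:Qconjugate2} the monic family satisfies $\QQ_n = J_{n+1}\overline{\QQ_n}$, and the goal is the identical relation $\QQ_n' = J_{n+1}\overline{\QQ_n'}$ for the orthonormalized family. Substituting the definition and using $\overline{\QQ_n} = J_{n+1}\QQ_n$ (a consequence of $J_{n+1}^2 = I_{n+1}$), I compute $J_{n+1}\overline{\QQ_n'} = J_{n+1}\overline{H_n^{-1/2}}\,\overline{\QQ_n} = \bigl(J_{n+1}\overline{H_n^{-1/2}}J_{n+1}\bigr)\QQ_n$. Comparing with $\QQ_n' = H_n^{-1/2}\QQ_n$, the whole relation reduces to the single matrix identity $J_{n+1}\overline{H_n^{-1/2}}J_{n+1} = H_n^{-1/2}$.

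To establish this, I would first prove the analogous identity for $H_n$ itself, namely $J_{n+1}\overline{H_n}J_{n+1} = H_n$. This follows directly from the integral representation of $H_n$: writing $\overline{H_n} = \int_\Omega \overline{\QQ_n}\,\QQ_n^\tr w\, dxdy$ and inserting $J_{n+1}\overline{\QQ_n} = \QQ_n$ together with $\QQ_n^\tr J_{n+1} = \QQ_n^*$ (both consequences of \eqref{eq:Qconjugate2} and the symmetry $J_{n+1}^\tr = J_{n+1}$) collapses the integrand back to $\QQ_n\QQ_n^*$. Inverting then gives $J_{n+1}\overline{H_n^{-1}}J_{n+1} = H_n^{-1}$ as well.

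The main obstacle is passing from $H_n$ (or $H_n^{-1}$) to its square root, since $H_n^{-1/2}$ is defined through the spectral decomposition rather than by an algebraic formula in $H_n$. I would handle this by the uniqueness of the positive definite Hermitian square root. Set $K := J_{n+1}\overline{H_n^{-1/2}}J_{n+1}$. Because $\overline{H_n^{-1/2}}$ is positive definite Hermitian and $J_{n+1}$ is real orthogonal, $K$ is again positive definite Hermitian; and $K^2 = J_{n+1}\overline{H_n^{-1}}J_{n+1} = H_n^{-1}$ by the previous step. Thus $K$ and $H_n^{-1/2}$ are both positive definite Hermitian square roots of $H_n^{-1}$, so they coincide, yielding $J_{n+1}\overline{H_n^{-1/2}}J_{n+1} = H_n^{-1/2}$ and hence \eqref{eq:Qconjugate} for $\{Q_{k,n}'\}$. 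I expect the only place requiring genuine care is checking the Hermiticity and positive definiteness of $K$ (using $J_{n+1}^* = J_{n+1}$ and $J_{n+1}^2 = I_{n+1}$), so that the uniqueness of the square root legitimately applies.
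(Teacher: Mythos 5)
Your proposal is correct and takes essentially the same route as the paper: the identical Gram-matrix computation gives orthonormality, and the conjugation symmetry is reduced, just as in the paper, to the matrix identity $H_n^{-\f12} = J_{n+1}\overline{H_n^{-\f12}}\,J_{n+1}$, which is in turn derived from $H_n = J_{n+1}\overline{H_n}\,J_{n+1}$. The only difference is that where the paper passes from $H_n$ to $H_n^{-\f12}$ with the words ``by its definition,'' you supply the justification explicitly via uniqueness of the positive definite Hermitian square root --- a filled-in detail within the same argument, not a different approach.
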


\begin{proof}
By definition, $\QQ_n' (\QQ_n')^* = H_n^{-\f12} \QQ_n \QQ_n^* H_n^{-\f12}$, so that the integral of $\QQ'_n (\QQ_n')^*$ 
is an identity matrix. In other words, $\QQ_n'$ consists of an orthonormal basis of $\CV_n^2(W,\CC)$. We now
prove that $\QQ'_n$ satisfies \eqref{eq:Qconjugate2}.   

Sine $Q_{k,n}$ satisfies \eqref{eq:Qconjugate}, it follows by \eqref{eq:Qconjugate2} that $H_n$ satisfies
$H_n =  J_{n+1} \overline{H_n} J_{n+1}$. By its definition, $H_n^{-\f12}$ satisfies the same relation. 
Since $J_{n+1} J_{n+1} = I_{n+1}$, it follow that 
$$
   J_{n+1} \QQ_n' = J_{n+1} H_n^{-\f12} \QQ_n = J_{n+1} H_n^{-\f12} J_{n+1} \overline{\QQ_n}
      = \overline{H_n^{-\f12}\QQ_n} = \overline{\QQ_n'},  
$$
which verifies that $\QQ_n'$ satisfy \eqref{eq:Qconjugate2} and completes the proof.
\end{proof}

\begin{rem} 
It should be pointed out that not every basis of $\CV_n^2(W, \CC)$ satisfies the relation \eqref{eq:Qconjugate}. 
Indeed, suppose $\QQ_n$ consists of a basis of $\CV_n^2(W, \CC)$ that satisfies \eqref{eq:Qconjugate}, then 
$M \QQ_n$ also consists of a basis of $\CV_n^2(W,\CC)$ for every invertible matrix $M$ of size $(n+1) \times (n+1)$ 
and we can choose an $M$ so that \eqref{eq:Qconjugate2} fails to hold for $M\QQ_n$. The relation 
\eqref{eq:Qconjugate} plays an essential role in our development in the next section.
\end{rem}

Below we give two classical examples of complex orthogonal polynomials of two variables. The first one is
the complex Hermite polynomials introduced in \cite{Ito}, which have been studied by many authors, see 
\cite{Gh1, Gh, Inti, Ismail} and the references therein. All properties list below are known, although some are 
given in somewhat different forms. 

\medskip\noindent
{\bf Example 3.1.}
{\it Complex Hermite polynomials}. A classical family of polynomials that are orthogonal with respect to 
the weight function
$$
   w_H(z):= \frac{1}{\pi} e^{-|z|^2}, \qquad z \in \CC,
$$
which satisfies $w_H(z) = W_H(x,y)$, is introduced in \cite{Ito} by 
\begin{equation} \label{HermiteC}
  H_{k,j} (z,\bar z): = (-1)^{k+j} e^{z \bar z} \left( \frac{\partial}{\partial z} \right)^k 
   \left( \frac{\partial}{\partial \bar z} \right)^j e^{- z \bar z}, \quad k,j = 0,1,\ldots, 
\end{equation}
where, with $z = x + i y$, 
$$ 
 \frac{\partial}{\partial z}  = \frac12 \left ( i \frac{\partial}{\partial x} + \frac{\partial}{\partial y}  \right)
 \quad \hbox{and} \quad  \frac{\partial}{\partial \bar z}  = \frac12 \left ( i \frac{\partial}{\partial x} - \frac{\partial}{\partial y}  \right).
$$
By induction, it is not difficult to see that $H_{k,j}$ satisfies an explicit formula  
\begin{equation} \label{HermiteC-1}
  H_{k,j} (z,\bar z) = z^k {\bar z}^j {}_2 F_0 \left(-k, -j; \f{1}{z \bar z}\right )
\end{equation}
from which it follows immediately that 
\begin{equation} \label{HermiteC-2}
    H_{k,j} (z, \bar z) = \overline {H_{j,k}(z, \bar z)}. 
\end{equation}
Working with \eqref{HermiteC-1} by rewriting the summation in ${}_2 F_0$ in reverse order, it is easy to deduce
that $H_{k,j}$ can be written as a summation in ${}_1 F_1$, which leads to 
\begin{equation} \label{HermiteC-3}
      H_{k,j} (z,\bar z) = (-1)^j j! z^{k-j} L_j^{(k-j)}(|z|^2), \qquad k \ge j, 
\end{equation}
where $L_j^{(\a)}$ is again the Laguerre polynomial. Using the property \cite[(5.1.13)]{Szego} of the Laguerre 
polynomials, it follows that $H_{k,j}$ satisfy the recursive relation
\begin{equation} \label{HermiteC-4}
   z H_{k,j} (z, \bar z) = H_{k+1,j}(z,\bar z) + j H_{k,j-1}(z, \bar z). 
\end{equation}
Using polar coordinates and the orthogonality of the Laguerre polynomials, we see that 
\begin{equation} \label{HermiteC-5}
     \int_\CC H_{k,j}(z,\bar z) \overline {H_{m,l}(z,\bar z)} w_H(z)dxdy = j! k! \delta_{k,m}\delta_{j,l}.
\end{equation}
Finally, $H_{k,j} \in \CV_{k+j}^2(w_H, \CC)$ and a mutually orthogonal basis of $\CV_n^2(w_H, \CC)$ is
given by $\{H_{n-j,j}: 0 \le j \le n\}$, which satisfies \eqref{eq:Qconjugate} by \eqref{HermiteC-2}.   \qed

\medskip

Our second example is the disk polynomials, which were first introduced  by Zernik \cite{Z, Z2} in his
work in optics (for $\mu=0$) and have been extensively studied (see, for example, references in \cite{W}). 
We follow \cite[Section 2.4.3]{DX} below.

\medskip\noindent
{\bf Example 3.2.}
{\it Zernike and Disk polynomials}. These polynomials are orthogonal with respect to the weight function
$$
  w_\l(z) = \frac{\l+1}{\pi} (1- |z|^2)^\l, \quad \l > -1, \quad z \in \CC,
$$
which satisfies $w_\l(z) = W_\l(x,y)$.  For $k, j \ge 0$, we define 
$$
   P^\l_{k,j}(z,\bar z) = \frac{(\lambda+1)_{k+j}}{(\lambda+1)_k(\lambda+1)_j} z^k \bar z^j
       {}_2F_1\Big(\begin{matrix} -k,-j \cr - \lambda - k -j\end{matrix};
   \frac{1}{z \bar z}\Big), \qquad k, j \ge 0.  
$$
which shows immediately that 
\begin{equation}\label{disk-poly}
   P^\lambda_{k,j}(z,\bar z ) = \overline{P^\lambda_{j,k}(z,\bar z)}.  
\end{equation}
They can be written in terms of the classical Jacobi polynomial $P_j^{(\a,\b)}(t)$ as
\begin{equation}\label{disk-poly2}
   P^\lambda_{k,j}(z,\bar z) = 
   \frac{j!}{(\lambda+1)_j} P_{j}^{(\lambda,k-j)}(2 |z|^2 -1) z^{k-j}, \quad k > j.
\end{equation}
These polynomials satisfy a recursive relation defined by 
\begin{equation}\label{disk-poly3}
   P^\lambda_{k,j}(z,\bar z) = \frac{(\lambda+1)_{k+j}}{(\lambda+1)_k(\lambda+1)_j} z^k \bar z^j
       {}_2F_1\Big(\begin{matrix} -k,-j \cr - \lambda - k -j\end{matrix};
   \frac{1}{z \bar z}\Big), \qquad k, j \ge 0.  
\end{equation}
Furthermore, their orthogonality is given by 
\begin{equation}\label{disk-poly4} 
  \int_{\BB^2} P^\lambda_{k,j}(z,\bar z) \overline{ P^\lambda_{m,l}(z, \bar z)} d w_\lambda(z) dxdy
   = h_{k,j}^\l \delta_{k,m} \delta_{j,l},
\end{equation}
where 
$$
  h_{k,j}^\l : = \frac{\lambda+1}{\lambda+k+j+1} \frac{k! j!}{(\lambda+1)_k(\lambda+1)_j}.
$$
The polynomial $P^\lambda_{k,j} \in \CV_{k+j}^2(w_\l, \CC)$ and a mutually orthogonal basis of $\CV_n^2(w_\l, \CC)$ is
given by $\{P^\l_{n-j,j}: 0 \le j \le n\}$, which satisfies \eqref{eq:Qconjugate} by \eqref{disk-poly}.   \qed

\medskip

Comparing these two examples with Example 2.1 and Example 2.2 in Section 2 shows a close relation between
the complex and real orthogonal polynomials. In the next section, we clarify this relation. 

\section{Complex vs real orthogonal polynomials}
\setcounter{equation}{0}

In this section we establish connections between complex orthogonal polynomials and real orthogonal polynomials
of two variables.  

\begin{defn} \label{def:PvsQ}
Given $\{Q_{k,n}: 0 \le k \le n\} \in \CV_n^2(W,\CC)$, we define 
\begin{align} \label{eq:PvsQ}
\begin{split}
  P_{k,n}(x,y) & := \frac{1}{\sqrt{2}} \left[Q_{k,n} (z, \bar z) + Q_{n-k,n} (z, \bar z) \right], \quad 0 \le k \le \f n 2, \\
  P_{k,n}(x,y) & := \frac{1}{\sqrt{2} i } \left[Q_{k,n} (z, \bar z) - Q_{k-k,n} (z, \bar z) \right], \quad \f n 2 < k \le n. 
\end{split}
\end{align}
Conversely, given $\{P_{k,n}(x,y): 0 \le k \le n\} \in \CV_n^2(W)$, we define 
\begin{align} \label{eq:QvsP}
\begin{split}
   Q_{k,n}(z, \bar z) &:=   \frac{1}{\sqrt{2}} \left[P_{k,n} (x,y) - i P_{n-k,n}(x,y) \right], \quad 0 \le k \le \f n 2 \\
   Q_{k,n}(z, \bar z) &:=   \frac{1}{\sqrt{2}} \left[P_{n-k,n} (x,y) + i P_{k,n}(x,y) \right], \quad \f n 2 < k \le n,\\
   Q_{\f n 2 ,n}(z, \bar z) &:=   \frac{1}{\sqrt{2}} P_{\f n 2, n}(x,y), \quad \hbox{$n$ is even}. 
\end{split}
\end{align}
\end{defn} 

We define a matrix $L_n$ of $(n+1) \times (n+1)$ as follows: 
$$
   L_{2 m-1} :=  \frac{1}{\sqrt{2}} \left[ \begin{matrix} 
            I_m  & i J_m  \\   J_m & i I_m
            \end{matrix} \right] \quad \hbox{and} \quad 
  L_{2m} :=  \frac{1}{\sqrt{2}}  \left[ \begin{matrix} 
          I_m & 0 & J_m \\  0 & \sqrt{2} & 0 \\  I_m & 0 & - i J_m 
            \end{matrix} \right]. 
$$

\begin{prop}
The matrix $L_n$ is unitary, that is, $L_n L_n^* = I_{n+1}$, and it satisfies
$$
   L_n L_n^\tr = L_n^\tr L_n = J_{n+1}.
$$ 
Furthermore, the polynomials $\PP_n^\tr= \{P_{k,n}: 0\le k \le n\}$ and $\QQ_n^\tr= \{Q_{k,n}: 0\le k \le n\}$ in the
Definition \ref{def:PvsQ} are related by 
\begin{equation} \label{eq:Q=LP}
     \QQ_n= L_n \PP_n \qquad \hbox{and} \qquad \PP_n = L_n^* \QQ_n.
\end{equation}
\end{prop}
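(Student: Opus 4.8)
The plan is to dispatch the two displayed relations \eqref{eq:Q=LP} first, since $\QQ_n=L_n\PP_n$ is nothing more than Definition~\ref{def:PvsQ} rewritten in matrix form. I would collect, row by row, the coefficients of the $P_{j,n}$ occurring in each formula for $Q_{k,n}$ in \eqref{eq:QvsP}. The one point needing attention is that every $Q_{k,n}$ is assembled from the single pair $P_{k,n},P_{n-k,n}$, so the involution $k\mapsto n-k$ reverses the index order: this is exactly what sends the coefficient of $P_{n-k,n}$ to an anti-diagonal position and hence builds the backward-identity blocks $J_m$, while the coefficient of $P_{k,n}$ stays on the main diagonal and builds $I_m$. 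Splitting $k<n/2$ from $k>n/2$, and for $n=2m$ isolating the self-paired index $k=m$ into the central row, reproduces precisely the block forms of $L_{2m-1}$ and $L_{2m}$.

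For the algebraic identities I would use that the change of basis decouples over the index pairs $\{k,n-k\}$. On each such pair the action is the single $2\times2$ matrix $U=\frac{1}{\sqrt2}\left(\begin{smallmatrix}1&-i\\ 1&i\end{smallmatrix}\right)$, for which $UU^*=I_2$ and $UU^\tr=J_2$ are immediate one-line checks; for even $n$ the leftover self-paired index $k=m$ contributes only a trivial $1\times1$ block. Reassembling these blocks over all pairs --- using $J_m^2=I_m$, $J_m^\tr=J_m$, and the decomposition $J_{2m}=\left(\begin{smallmatrix}0&J_m\\ J_m&0\end{smallmatrix}\right)$ --- yields $L_nL_n^*=I_{n+1}$ and $L_nL_n^\tr=J_{n+1}$, and the companion identity in the statement follows by the same block bookkeeping. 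If a fully self-contained computation is preferred, I would instead multiply the displayed block forms directly: in $L_nL_n^*$ the alternating signs of $\pm i$ force the cross terms to cancel and the squared anti-diagonal blocks collapse through $J_m^2=I_m$, leaving $I_{n+1}$; in $L_nL_n^\tr$ the identical bookkeeping kills the diagonal contributions and leaves only the off-diagonal $J_m$ blocks, i.e.\ $J_{n+1}$.

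With unitarity in hand the second half of \eqref{eq:Q=LP} is free: left-multiplying $\QQ_n=L_n\PP_n$ by $L_n^*$ and invoking $L_n^*L_n=I_{n+1}$ gives $\PP_n=L_n^*\QQ_n$, which is equivalently the assertion that the two groups of formulas \eqref{eq:PvsQ} and \eqref{eq:QvsP} are genuinely inverse to one another. I expect the only real obstacle to be organizational rather than conceptual: keeping the index bookkeeping straight so that each coefficient is routed to the correct diagonal ($I_m$) or anti-diagonal ($J_m$) slot with the correct sign, and treating the central row and column of $L_{2m}$ as a separate case. Once the reduction to the $2\times2$ block $U$ is set up, the rest is mechanical.
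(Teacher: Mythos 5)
Your route is, in substance, the paper's own: the paper disposes of this proposition with the single sentence that everything follows from straightforward matrix multiplication, and your reduction to the index pairs $\{k,n-k\}$ with the $2\times2$ block $U=\frac{1}{\sqrt 2}\left(\begin{smallmatrix}1&-i\\ 1&i\end{smallmatrix}\right)$ is a clean way to organize exactly that multiplication. The checks $UU^*=I_2$ and $UU^\tr=J_2$, the reassembly into $L_nL_n^*=I_{n+1}$ and $L_nL_n^\tr=J_{n+1}$, and the derivation of $\PP_n=L_n^*\QQ_n$ from unitarity are all correct.

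The gap is the sentence claiming that ``the companion identity in the statement follows by the same block bookkeeping.'' It does not. The same bookkeeping applied to $L_n^\tr L_n$ produces the blocks $U^\tr U$, and
$$
U^\tr U=\frac12\begin{pmatrix}1&1\\ -i&i\end{pmatrix}\begin{pmatrix}1&-i\\ 1&i\end{pmatrix}=\begin{pmatrix}1&0\\ 0&-1\end{pmatrix}\neq J_2,
$$
so $L_n^\tr L_n$ is a diagonal signature matrix ($+1$ on the indices $k\le n/2$, $-1$ on the indices $k>n/2$), not $J_{n+1}$; already for $n=1$ one has $L_1=U$ and $L_1^\tr L_1=\diag(1,-1)$. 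Indeed, given $L_nL_n^*=I_{n+1}$ and $L_nL_n^\tr=J_{n+1}$, the additional equality $L_n^\tr L_n=J_{n+1}$ would force $J_{n+1}L_n=L_nJ_{n+1}$, i.e.\ centro-symmetry of $L_n$, which the matrix built from Definition~\ref{def:PvsQ} visibly lacks (each row has a real entry in column $k$ and a purely imaginary entry in column $n-k$). So this half of the displayed identity cannot be proved: it is an error in the paper's statement rather than a fixable omission in your argument. Relatedly, your claim that the definition reproduces ``precisely'' the printed block forms of $L_{2m-1}$ and $L_{2m}$ is also not right --- as printed, $L_1=\frac{1}{\sqrt2}\left(\begin{smallmatrix}1&i\\ 1&i\end{smallmatrix}\right)$ is singular, so the displayed matrices carry sign typos and cannot be unitary. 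An honest execution of your own computation would have surfaced both discrepancies; what survives, and what the paper actually uses later (in Favard's theorem and the Christoffel--Darboux formula), is $L_nL_n^*=L_n^*L_n=I_{n+1}$, $L_nL_n^\tr=J_{n+1}$, and \eqref{eq:Q=LP}.
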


\begin{proof}
All properties follow directly from straightforward  matrix multiplication. 
\end{proof}

\begin{thm}
Let $\{P_{k,n}: 0 \le k \le n\}$ and $\{Q_{k,n}: 0\le k \le n\}$ be polynomials given in Definition \ref{def:PvsQ}.
\begin{enumerate}[   (1)]
\item $\{Q_{k,n}: 0 \le k \le n\}$ is a basis of $\CV_n^2(W, \CC)$ that satisfy \eqref{eq:Qconjugate}
if and only if $\{P_{k,n}: 0 \le k \le n\}$ is a basis of $\CV_n^2(W)$.  
\item $\{Q_{k,n}: 0 \le k \le n\}$ is an orthonormal basis of $\CV_n^2(W, \CC)$ if and only if 
$\{P_{k,n}: 0 \le k \le n\}$ is an orthonormal basis of $\CV_n^2(W)$. 
\item The reproducing kernels of $\CV_n^2(W)$ and $\CV_n^2(W,\CC)$ agree; in particular,
\begin{equation}\label{K=KC}
    \Pb_n^\CC(z, \zeta) = \Pb_n( (x,y), (u,v)) \quad \hbox{and} \quad 
    \Kb_n^\CC(z, \zeta) = \Kb_n( (x,y), (u,v)), 
\end{equation}
where $z = x+iy, \quad \zeta = u+iv$.
\end{enumerate}
\end{thm}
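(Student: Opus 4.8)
The plan is to route everything through the unitary matrix $L_n$ of the preceding Proposition, using $\QQ_n = L_n \PP_n$ and $\PP_n = L_n^* \QQ_n$ from \eqref{eq:Q=LP} together with the identities $L_n L_n^* = I_{n+1}$ and $L_n L_n^\tr = J_{n+1}$. Two preliminary observations do the real work. First, I would record that $\CV_n^2(W,\CC)$ is the complexification of $\CV_n^2(W)$: writing $F \in \Pi_n^2(\CC)$ as $F = P + i S$ with $P,S$ real, the condition $\la F, z^a \bar z^b\ra_W^\CC = 0$ for $a+b \le n-1$ reads $\int_\Omega F\,\bar z^a z^b\, w\, dxdy = 0$; since $\{\bar z^a z^b : a+b \le n-1\}$ and $\{x^c y^d : c+d \le n-1\}$ have the same span and the latter are real-valued, this is equivalent to $\int_\Omega P\, x^c y^d\, W\,dxdy = \int_\Omega S\, x^c y^d\, W\,dxdy = 0$ for all $c+d \le n-1$, i.e. to $P,S \in \CV_n^2(W)$. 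Thus $\CV_n^2(W,\CC) = \CV_n^2(W) \oplus i\,\CV_n^2(W)$, the two inner products agree on real polynomials, and an element of $\CV_n^2(W,\CC)$ is real exactly when it lies in $\CV_n^2(W)$. Second, I would show that for $\QQ_n = L_n \PP_n$ the symmetry \eqref{eq:Qconjugate2} holds iff $\PP_n$ is real: from $L_n L_n^* = I_{n+1}$ one gets $\overline{L_n} = (L_n^\tr)^{-1}$, whence $J_{n+1}\overline{L_n} = L_n L_n^\tr (L_n^\tr)^{-1} = L_n$, so $J_{n+1}\overline{\QQ_n} = L_n \overline{\PP_n}$, which equals $\QQ_n = L_n\PP_n$ iff $\overline{\PP_n} = \PP_n$.

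With these in hand, (1) is immediate. If $\{P_{k,n}\}$ is a basis of $\CV_n^2(W)$, the $P_{k,n}$ are real and lie in $\CV_n^2(W,\CC)$, so the complex combinations $\QQ_n = L_n \PP_n$ lie in $\CV_n^2(W,\CC)$; as $L_n$ is invertible and $\dim_\CC \CV_n^2(W,\CC) = n+1$, the $Q_{k,n}$ form a basis, and by the second observation they satisfy \eqref{eq:Qconjugate2}, hence \eqref{eq:Qconjugate}. Conversely, if $\{Q_{k,n}\}$ is a basis of $\CV_n^2(W,\CC)$ satisfying \eqref{eq:Qconjugate}, the second observation makes $\PP_n = L_n^* \QQ_n$ real, the first observation places the $P_{k,n}$ in $\CV_n^2(W)$, and invertibility of $L_n^*$ makes them a basis.

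For (2) I would restrict to the pairing of (1), where $\QQ_n$ satisfies \eqref{eq:Qconjugate} and $\PP_n = L_n^*\QQ_n$ is real (the setting singled out in the Remark); the forward direction needs no such restriction, but the reverse uses (1) to supply the conjugate symmetry. Because $\PP_n$ is real, $\int_\Omega \PP_n \PP_n^* w = \int_\Omega \PP_n \PP_n^\tr W =: G_n$ is the real Gram matrix, while $H_n = \int_\Omega \QQ_n \QQ_n^* w = L_n G_n L_n^*$. Since $L_n$ is unitary, $H_n = I_{n+1}$ iff $G_n = I_{n+1}$, i.e. $\{Q_{k,n}\}$ is orthonormal iff $\{P_{k,n}\}$ is.

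Finally, (3) is a one-line computation in the orthonormal case. Using $\QQ_m = L_m \PP_m$, the realness of $\PP_m$ (so $\overline{\PP_m} = \PP_m$ at the real point $(u,v)$), and the identity $L_m^\tr \overline{L_m} = I_{m+1}$ obtained by transposing $L_m^* L_m = I_{m+1}$, I find
\[
  \Pb_m^\CC(z,\zeta) = \QQ_m^\tr\,\overline{\QQ_m} = \PP_m^\tr\, L_m^\tr \overline{L_m}\, \PP_m = \PP_m^\tr \PP_m = \Pb_m((x,y),(u,v)),
\]
with the first factor of each product at $(x,y)$ and the second at $(u,v)$; summing over $m$ via \eqref{reprod-kernelC} and \eqref{reprod-kernel} gives $\Kb_n^\CC = \Kb_n$, which is \eqref{K=KC}, consistent with the basis-independence of reproducing kernels. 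The main obstacle is the first preliminary observation: making precise that $\CV_n^2(W,\CC)$ is the complexification of $\CV_n^2(W)$ and that the two inner products coincide on real polynomials. Once this identification and the algebraic identities of $L_n$ are in place, parts (1)--(3) reduce to the bookkeeping above.
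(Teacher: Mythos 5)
Your proof is correct and follows essentially the same route as the paper's: both pass through the change of basis $\QQ_n = L_n \PP_n$, the unitarity relations $L_n L_n^* = I_{n+1}$ and $L_n L_n^\tr = J_{n+1}$, the agreement of $\la \cdot,\cdot\ra_W^\CC$ with $\la \cdot,\cdot\ra_W$ on real polynomials, and the identity $L_m^\tr \overline{L_m} = I_{m+1}$ for the kernel computation. The only difference is expository: your two preliminary observations (the complexification $\CV_n^2(W,\CC) = \CV_n^2(W) \oplus i\,\CV_n^2(W)$ and the equivalence of \eqref{eq:Qconjugate2} with realness of $\PP_n$) spell out membership and symmetry steps that the paper's terser Gram-matrix argument leaves implicit.
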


\begin{proof}
If $f$ and $g$ are real valued polynomials, then $\la f, g\ra_W^\CC = \la f, g\ra_W$. Since, by definition, 
$P_{k,n}(x,y) = \Re \{Q_{k,n}(z,\bar z)\}$ for $0 \le k \le n/2$ and $P_{k,n}(x,y) = \Im \{Q_k^n(z,\bar z)\}$ 
for $n/2 < k \le n$, all $P_{k,n}$ are real valued polynomials in $\Pi_n^2$. On the other hand, given $\{P_{k,n}\}$, 
the definition of $\{Q_{k,n}\}$ shows that \eqref{eq:Qconjugate} is satisfied and $Q_{k,n} \in \Pi_n^2(\CC)$. 
Furthermore, since $\QQ_n = L_n \PP_n$, we have  
\begin{align*}
   \la \QQ_n, \QQ_m^\tr \ra_W^\CC =  L_n \la \PP_n \PP_m^\tr \ra_W L_m^*,
\end{align*}
which shows that $\{Q_{k,n}\}$ is a basis of $\CV_n^2(W, \CC)$ if and only if $\{P_{k,n}\}$ is a basis of $\CV_n^2(W)$. 
If $\{P_{k,n}\}$ is an orthonormal basis, then $\la \PP_n \PP_n^\tr \ra_W= I_{n+1}$, so that, by $L_nL_n^* = I_{n+1}$,
$\la QQ_n, \QQ_n^\tr \ra_W^\CC =I_{n+1}$ and $\{Q_{k,n}\}$ is orthonormal. Since $L^n$ is unitary, the relation
is reversible. This completes the proof of assertions (1) and (2). 

With $z = x+iy$ and $\zeta = u+iv$ and using $L_m^\tr  \overline{L_m} = (L_m^* L_m)^\tr = I_{m+1}$, we obtain
$$
  [\QQ_m (z,\bar z)]^\tr \overline {\QQ_m (w, \bar w) } = [\PP_m(x,y)]^\tr L_m^\tr \overline{L_m} \PP_m(u,v) = [\PP_m(x,y)]^\tr 
  \PP_m(u,v); 
$$
the left hand side is the reproducing kernel of $\CV_m^2(W,\CC)$ while the right hand side is the reproducing kernel
of $\CV_m^2(W)$. Summing over $m$ proves \eqref{K=KC}. 
\end{proof}

Since the integral measure is the same, the convergence of the Fourier orthogonal expansions in either complex variable or two real variables should be the same. The identity \eqref{K=KC} not
only confirms this conception, it also shows that the reproducing kernels are identitical. In particular, the kernels
$\Pb_n^\CC(z,\bar z)$ and $\Kb_n^\CC(z,\bar z)$ are real valued. 

Let us revisit our examples on the Hermite polynomials and disk polynomials. Notice that the polar coordinate 
$z = r e^{i \t}$ in $\CC$ is equivalent to the polar coordinates $(x,y) = (r\cos \t, r \sin \t)$ by the Euler formula of
$e^{i\t} = \cos \t + i \sin \t$. 

\medskip\noindent
{\bf Example 4.1.} {\it Hermite polynomials}. Let $H_{k,j}$ be the complex Hermite polynomials in Example 3.1. Then
$H_{n-j,j} \in \CV_n^2(W, \CC)$. By \eqref{HermiteC-3}, we see that
\begin{align*}
   \Re \{H_{n-j,j}(z,\bar z)\} & = (-1)^j j! L_j^{(n-j)}( r^2) r^{n-2j} \cos (n-2j)\theta, \quad 0 \le j \le \f n 2  \\
   \Im \{H_{n-j,j} (z,\bar z)\} & = (-1)^j j! L_j^{(n-j)}( r^2)  r^{n-2j} \sin  (n-2j)\theta,   \quad 0 \le j < \f n 2, 
\end{align*}
which is, up to a constant, exactly the orthogonal polynomials \eqref{Hermite2} of two real variables in Example 2.1.
This also verifies \eqref{eq:PvsQ} up to a normalization constant.  

\qed

\medskip\noindent
{\bf Example 4.2.} {\it Disk polynomials}. Let $P_{k,j}$ be the complex disk polynomials in Example 3.2. Then
$P_{n-j,j}^\l \in \CV_n^2(W, \CC)$. By \eqref{disk-poly2},
\begin{align*}
   \Re \{P_{n-j,j}^\l (z,\bar z)\} & = \frac{j}{(\l+1)_j}P_j^{(\l,n-2j)}( 2 r^2-1) r^{n-2j} \cos (n-2j)\theta, \quad 0 \le j \le \f n 2  \\
   \Im \{P_{n-j,j}^\l (z,\bar z)\} & = \frac{j}{(\l+1)_j}P_j^{(\l,n-2j)}( 2 r^2-1) r^{n-2j}  \sin  (n-2j)\theta,   \quad 0 \le j < \f n 2, 
\end{align*}
which is, up to a constant, exactly the orthogonal polynomials \eqref{Disk2} of two real variables in Example 2.2.
This also verifies \eqref{eq:PvsQ} up to a normalization constant. Using the result from real disk polynomials,
we have, for example, the following relation: 
\begin{align*}
  \sum_{k+j =n} & \frac{P_{k,j}^\l (z,\bar z) \overline{P_{k,j}^\l (\zeta,\bar \zeta)}}{h_{j,k}^\l}\\
    & = \frac{n+\mu+\f12}{\mu+\f12} c_\mu \int_{-1}^1 C_n^{\mu + \f12} \left(\Re\{z \bar \zeta\} + \sqrt{1-|z|^2} \sqrt{1-|\zeta|^2} t\right)
     (1-t^2)^{\mu -1} dt. 
\end{align*}
Indeed, the left hand side of this identity is the reproducing kernel of $\CV_n^d$, so that this is the
identity in Corollary 6.1.10 of \cite{DX} written in complex variables. 

\medskip\noindent
{\bf Example 4.3.}
{\it Chebyshev polynomials on the region bounded by the deltoid}. These polynomials are orthogonal with respect to
\begin{equation}
  w_\a(z): = \left [-3(x^2+y^2 + 1)^2 + 8 (x^3 - 3 xy^2)  +4\right]^{\a}, \quad \a = \pm \frac12,
\end{equation}
on the deltoid, which is a region bounded by the Steiner's hypocycloid $-3(x^2+y^2 + 1)^2 + 8 (x^3 - 3 xy^2)  + 4 =0$ that
can be described as the curve  
$$
  x + i y = (2 e^{ i \t} + e^{- 2 i \t})/3, \qquad  0 \le \t \le 2 \pi.
$$ 
The three-cusped region is depicted in Figure \ref{figure:region}. 
\begin{figure}
\begin{center} 
 \includegraphics[scale=0.48]{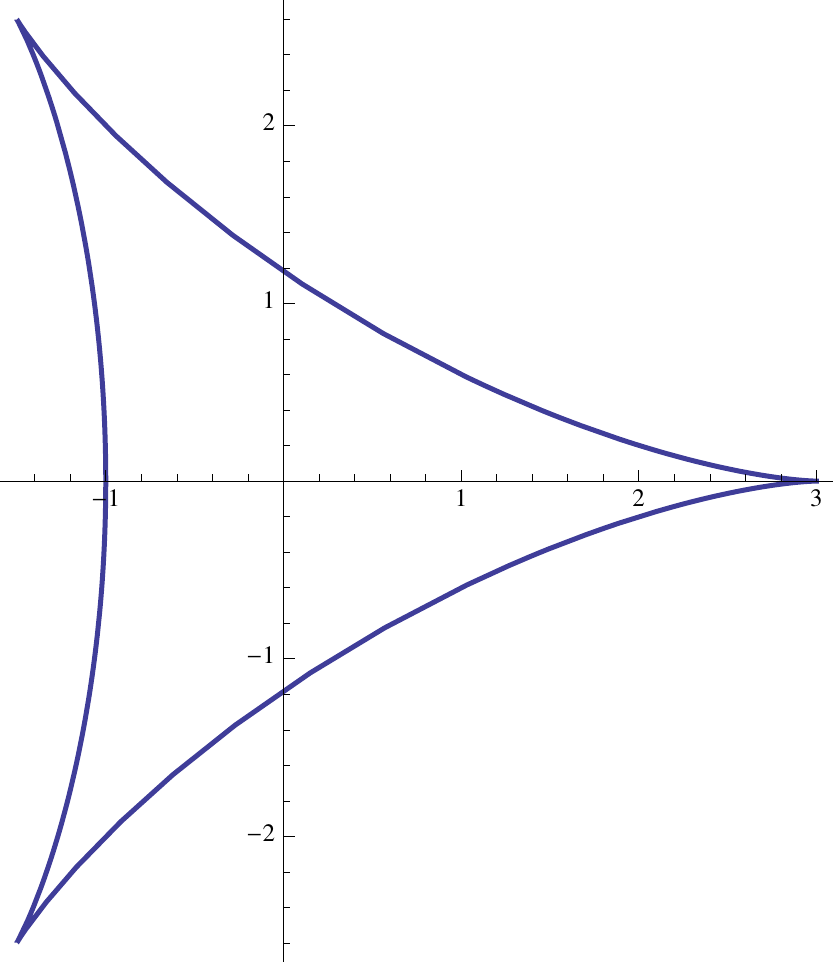} 
\label{figure:region} 
\end{center} 
\end{figure} 
These polynomials are first studied by Koornwinder in \cite{K} and they are related to the symmetric and
antisymmetric sums of exponentials on a regular hexagonal domain \cite{LSX}. In stead of stating their 
explicit formulas, it suffices to define these polynomials recursively. Let $T_k^n \in \CV_n^2(w_{-\frac12}, \CC)$
and $U_k^n \in  \CV_n^2(w_{\frac12}, \CC)$ be the Chebyshev polynomials of the first and the second kind,
respectively, defined by the recursive relations
\begin{align} \label{recurT}
     P _k^{n+1} (z,\bar z) = 3 z P_{k}^n(z,\bar z) -
                 P_{k+1}^n(z,\bar z) - P_{k-1}^{n-1}(z,\bar z) 
\end{align}
for $ 0 \le k \le n$ and $n\ge 1$, where $P_k^n$ is $T_k^n$ or $U_k^n$ as determined by 
\begin{align*}
&T_{-1}^n(z,\bar z ) : = T_1^{n+1}(z,\bar z ), \quad T_{n+1}^n(z,\bar z )
  : = T_{n}^{n+1}(z,\bar z ),\\
&U_{-1}^n(z,\bar z ) : =  0, \quad  U_n^{n-1}(z,\bar z ): =0,
\end{align*}
and, moreover, 
\begin{align*}
& T_0^0(z,\bar z)=1, \quad T_0^1(z,\bar z)= z,  \quad T_1^1(z,\bar z)= \bar z,\\
& U_0^0(z,\bar z)=1, \quad U_0^1(z,\bar z)= 3z,  \quad U_1^1(z,\bar z)= 3 \bar z.
\end{align*}
Then these polynomials satisfy the relation 
$$
  P_k^n (z, \bar z) = \overline{P_{n-k}^n (z, \bar z)}, \quad 0 \le k \le n. 
$$
Furthermore, $\{T_k^n(z, \bar z): 0 \le k \le n\}$ is a mutually orthogonal basis of  $\CV_n^2(w_{-\frac12}, \CC)$
and $\{U_k^n(z,\bar z): 0 \le k \le n\}$  is a mutually orthogonal basis of  $\CV_n^2(w_{-\frac12}, \CC)$.

This family of polynomials is known explicitly only in complex variables, although a real basis can be deduced from
\eqref{eq:PvsQ}.
\qed

\section{Structural relations of orthogonal polynomials} 
\setcounter{equation}{0}

Three--term relations for complex orthogonal polynomials are different from those for real orthogonal polynomials. 
In the following we normalize $W$ so that $\QQ_{0}(z,\bar z ) =1$ and we define $\QQ_{-1}(z,\bar z) =0$. Let
$\CM^\CC(n,m)$ denote the set of complex matrices of size $n \times m$. 

\begin{thm}
For $n \in \NN_0^d$, let $\QQ_n = \{Q_{k,n}:0 \le k \le n\}$ be a basis of $\CV_n^d(W, \CC)$ that satisfies 
\eqref{eq:Qconjugate2}. Then there are matrices $\a_n \in \CM^\CC(n+1,n+2)$, $\b_n \in \CM^\CC (n+1, n+1)$ 
and $\g_n \in \CM^\CC(n+1, n)$ such that 
\begin{align}\label{3-termC}
  z \QQ_n (z,\bar z) = \a_n \QQ_{n+1} (z,\bar z)+ \b_n \QQ_n (z,\bar z) + \g_{n-1} \QQ_{n-1} (z,\bar z),
\end{align}
where, setting $H_n = \la  \QQ_n, \QQ_n^\tr \ra_W$, then $\g_n$ satisfies
\begin{equation}\label{gamma-alpha}
    \g_{n-1} H_{n-1} = J_{n+1} (\a_{n-1} H_n)^\tr J_n.
\end{equation}
\end{thm}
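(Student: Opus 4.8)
The plan is to establish \eqref{3-termC} first by a degree-plus-orthogonality argument, then to read off the matrices $\g_{n-1}$ and $\a_{n-1}$ as Gram-type integrals, and finally to pass from one integral to the other using the conjugation symmetry \eqref{eq:Qconjugate2}. For existence, I note that each entry $z Q_{k,n}$ of $z\QQ_n$ lies in $\Pi_{n+1}^2(\CC)$, so it expands as a combination of $\QQ_0,\dots,\QQ_{n+1}$. For any $Q\in\CV_m^2(W,\CC)$ with $m\le n-2$ one has
\[
  \la z Q_{k,n}, Q\ra_W^\CC = \int_\Omega Q_{k,n}\,\overline{\bar z\,Q}\,w(z)\,dxdy = \la Q_{k,n}, \bar z Q\ra_W^\CC = 0 ,
\]
since $\bar z Q\in\Pi_{n-1}^2(\CC)$ while $Q_{k,n}\perp\Pi_{n-1}^2(\CC)$. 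Hence all components of degree $\le n-2$ vanish and we are left with $z\QQ_n = \a_n\QQ_{n+1}+\b_n\QQ_n+\g_{n-1}\QQ_{n-1}$, the matrices having the sizes forced by \eqref{3-termC}.

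Next I would isolate the coefficient matrices by orthogonality. Multiplying \eqref{3-termC} on the right by $\QQ_{n-1}^*$ and integrating against $w$, the $\QQ_{n+1}$ and $\QQ_n$ terms drop out; writing \eqref{3-termC} at level $n-1$ and pairing with $\QQ_n^*$ does the same, giving
\[
  \g_{n-1} H_{n-1} = \int_\Omega z\,\QQ_n\,\QQ_{n-1}^*\, w(z)\,dxdy ,
  \qquad
  \a_{n-1} H_n = \int_\Omega z\,\QQ_{n-1}\,\QQ_n^*\, w(z)\,dxdy .
\]

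The heart of the argument is to connect these two integrals through \eqref{eq:Qconjugate2}. From $\QQ_m = J_{m+1}\overline{\QQ_m}$ together with $J_m^\tr=J_m$ and $J_mJ_m=I_m$ one gets $\QQ_n = J_{n+1}\overline{\QQ_n}$ and $\QQ_{n-1}^* = \QQ_{n-1}^\tr J_n$. Substituting these into the formula for $\g_{n-1}H_{n-1}$ and pulling the constant matrices $J_{n+1}$ and $J_n$ outside the integral, then transposing the integrand and using $(\overline{\QQ_n})^\tr=\QQ_n^*$, yields
\[
  \g_{n-1} H_{n-1}
  = J_{n+1}\Bigl(\int_\Omega z\,\overline{\QQ_n}\,\QQ_{n-1}^\tr\, w(z)\,dxdy\Bigr) J_n
  = J_{n+1}\Bigl(\int_\Omega z\,\QQ_{n-1}\,\QQ_n^*\, w(z)\,dxdy\Bigr)^{\tr} J_n ,
\]
and recognizing the parenthesized integral as $\a_{n-1}H_n$ gives exactly \eqref{gamma-alpha}.

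I expect the only delicate point to be the bookkeeping in this last display: one must check that \eqref{eq:Qconjugate2} really converts $\overline{\QQ_n}\,\QQ_{n-1}^\tr$ into the transpose of $\QQ_{n-1}\QQ_n^*$, and that the factors $J_{n+1}$ and $J_n$ emerge on the correct sides and with the correct sizes. Everything else is routine orthogonality; the symmetry \eqref{eq:Qconjugate}, encoded as \eqref{eq:Qconjugate2}, is what makes the two otherwise unrelated integrals transposes of one another.
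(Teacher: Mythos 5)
Your proof is correct and follows essentially the same route as the paper's: expand $z\QQ_n$ in the bases $\QQ_0,\dots,\QQ_{n+1}$, kill all terms of degree below $n-1$ by orthogonality, identify $\g_{n-1}H_{n-1}$ and $\a_{n-1}H_n$ as Gram integrals, and convert one into the other using the conjugation symmetry \eqref{eq:Qconjugate2}. Your write-up even makes explicit two steps the paper leaves implicit, namely the vanishing of the low-degree coefficients via $\la zQ_{k,n},Q\ra_W^\CC=\la Q_{k,n},\bar z Q\ra_W^\CC$ and the bookkeeping that places $J_{n+1}$ and $J_n$ on the correct sides, so there is nothing to fix.
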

 
\begin{proof}
Since $z \QQ_n$ is a polynomial of degree $n+1$, it can be written as a linear combination of $\QQ_{n+1}, \QQ_n, 
\ldots, \QQ_0$. The orthogonality then implies three--terms relations. Furthermore, we have 
\begin{align*}
   \la z \QQ_n, \QQ_{n+1}^\tr \ra_W =  \a_n H_{n+1}, \quad  \la z \QQ_n, \QQ_n^\tr \ra_W =  \b_n H_n,\quad
    \la z \QQ_n, \QQ_{n-1}^\tr \ra_W =  \g_n H_{n-1}.
\end{align*}
In particular,  by \eqref{eq:Qconjugate2}, we see that 
\begin{align*}
 \la z \QQ_n, \QQ_{n-1}^\tr \ra_W = & \int_\Omega z \QQ_n (z,\bar z)\overline{\QQ_{n-1}(z,\bar z)^\tr} w(z) dxdy \\
     = &  J_{n+1}  \int_\Omega z \overline {\QQ_n(z,\bar z)} {\QQ_{n-1}(z,\bar z)^\tr} w(z) dxdy J_n 
      = J_{n+1} \la z \QQ_{n-1} \QQ_n\ra_W^\tr J_n. 
\end{align*}
Since $H_n$ is invertible, this verifies \eqref{gamma-alpha}. 
\end{proof}

For a matrix $M \in \CM(n, m)$, we define a matrix $M^\vee$ by
$$
   M^\vee: = J_n \overline{M} J_m.
$$
Taking conjugate of \eqref{3-termC} and applying \eqref{eq:Qconjugate2}, we see that $\QQ_n$ also satisfies
\begin{align}\label{3-termC2}
  \bar z \QQ_n (z,\bar z) = \a_n^\vee \QQ_{n+1} (z,\bar z)+ \b_n^\vee \QQ_n (z,\bar z) + \g_{n-1}^\vee \QQ_{n-1} (z,\bar z).
\end{align}

In the case that $\QQ_n$ consists of an orthonormal basis of $\CV_n^d(W,\CC)$, the matrix $H_n$ is an identity and 
the relation between $\a_n$ and $\g_n$ can be written as
\begin{equation} \label{eq:a-g2}
  \g_{n-1}^\vee =   \a_{n-1}^*\quad \hbox{or} \quad \g_{n-1} = (\a_{n-1}^*)^\vee. 
\end{equation}

The three--term relation \eqref{3-termC} can also be derived from the three--term relations of real orthogonal 
polynomials. 

\begin{prop}
Let $\PP_n$ consists of orthonormal basis of $\CV_n^d(W)$. Assume that $\{\PP_n\}$ satisfies the three--term
relations \eqref{three-term}. If $\QQ_n$ and $\PP_n$ are related by \eqref{eq:Q=LP}, then the coefficients of 
the three--term relation \eqref{3-termC} can be expressed in the coefficients of  \eqref{three-term} as follows:
\begin{align}\label{3-term-coeff}
   \a_n = L_n (A_{n,1}+ i A_{n,2}) L_{n+1}^* \quad \hbox{and}\quad \b_n = L_n (B_{n,1}+ i B_{n,2}) L_n^*.
\end{align}
In particular, if $W$ is centrally symmetric, then $\b_n = 0$ for all $n$. 
\end{prop}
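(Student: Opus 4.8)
The plan is to obtain the complex three--term relation \eqref{3-termC} by transporting the two real relations \eqref{three-term} through the unitary change of basis $\QQ_n = L_n \PP_n$, and then to identify $\a_n$ and $\b_n$ by matching coefficients. Since $L_n$ is a constant matrix and $z = x+iy$ is a scalar, they commute, so I would begin from
\[
   z\,\QQ_n = L_n\,(x+iy)\,\PP_n = L_n\,\bigl(x\,\PP_n + i\,y\,\PP_n\bigr).
\]

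Next, forming the first relation in \eqref{three-term} plus $i$ times the second gives
\[
   x\,\PP_n + i\,y\,\PP_n = (A_{n,1}+iA_{n,2})\,\PP_{n+1} + (B_{n,1}+iB_{n,2})\,\PP_n + (A_{n-1,1}^\tr + iA_{n-1,2}^\tr)\,\PP_{n-1}.
\]
I would then rewrite each $\PP_m$ back in terms of $\QQ_m$ using $\PP_m = L_m^* \QQ_m$ from \eqref{eq:Q=LP}, which yields
\[
   z\,\QQ_n = L_n(A_{n,1}+iA_{n,2})L_{n+1}^*\,\QQ_{n+1} + L_n(B_{n,1}+iB_{n,2})L_n^*\,\QQ_n + L_n(A_{n-1,1}^\tr + iA_{n-1,2}^\tr)L_{n-1}^*\,\QQ_{n-1}.
\]

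Comparing this expansion with \eqref{3-termC}, and using that $\QQ_{n+1}, \QQ_n, \QQ_{n-1}$ are bases of the mutually orthogonal spaces $\CV_{n+1}^2(W,\CC), \CV_n^2(W,\CC), \CV_{n-1}^2(W,\CC)$ so that the coefficients in \eqref{3-termC} are uniquely determined, I would read off
\[
   \a_n = L_n(A_{n,1}+iA_{n,2})L_{n+1}^*, \qquad \b_n = L_n(B_{n,1}+iB_{n,2})L_n^*,
\]
together with the byproduct $\g_{n-1} = L_n(A_{n-1,1}^\tr + iA_{n-1,2}^\tr)L_{n-1}^*$. For the centrally symmetric case I would invoke the fact recorded in Section 2 that central symmetry of $W$ forces $B_{n,1} = B_{n,2} = 0$; hence $B_{n,1}+iB_{n,2}=0$ and therefore $\b_n = L_n\,0\,L_n^* = 0$ for every $n$.

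There is no genuinely hard step in this argument: it is a substitution followed by coefficient matching. The only point that needs a word of justification is the legitimacy of the matching, namely that the coefficients in \eqref{3-termC} are unique; this is immediate from the orthogonality of the spaces $\CV_m^2(W,\CC)$ across distinct degrees $m$. I note also that orthonormality of $\PP_n$ is exactly what makes the lower-degree coefficient in \eqref{three-term} appear as $A_{n-1,i}^\tr$, but since the proposition asserts only the formulas for $\a_n$ and $\b_n$, the precise shape of the $\PP_{n-1}$ term is irrelevant to the two claimed identities.
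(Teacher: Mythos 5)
Your proof is correct and follows essentially the same route as the paper: the paper's own (one-sentence) argument is precisely to write $z\QQ_n = L_n(x+iy)\PP_n$, expand via the real three--term relations \eqref{three-term}, and convert back with $\PP_m = L_m^*\QQ_m$. Your added remarks on uniqueness of the coefficients and on $B_{n,i}=0$ in the centrally symmetric case just make explicit what the paper leaves implicit.
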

 
\begin{proof}
Setting $z = x + i y$ and $\QQ_n = L_n \PP_n$ in \eqref{3-termC}, we can expand $z \QQ_n$ by the three--term 
relations \eqref{three-term} for $\PP_n$ and using $\PP_n = L_n^* \QQ_n$ to obtain \eqref{3-termC}.  
\end{proof} 

The connection between the two three--term relations allow us to state  Favard's theorem for complex orthogonal 
polynomials. 

\begin{thm} \label{thm:FavardC}
Let $\{\QQ_n\}_{n=0}^\infty = \{Q_{k,n}: 0 \le k \le n, n\in \NN_0\}$, $\QQ_0 =1$, be an arbitrary sequence in $\Pi^2(\CC)$. 
Then the following statements are equivalent.
\item{}\hspace{.1in} (1).  There exists a positive definite linear functional $\CL$ on $\Pi^2(\CC)$ which 
makes $\{\QQ_n\}_{n=0}^\infty$ an orthonormal basis in $\Pi^d(\CC)$.
\item{}\hspace{.1in} (2). For $n \ge 0$, $1\le i\le d$, there exist matrices $\a_{n}: (n+1)\times (n+2)$ and
$b_{n}: (n+1)\times (n+1)$ such that 
\begin{align} \label{three-termC}
   z \QQ_n (z,\bar z) = \a_n \QQ_{n+1} (z,\bar z)+ \b_n \QQ_n (z,\bar z) + (\a_{n-1}^*)^\vee \QQ_{n-1} (z,\bar z),
\end{align}
and the matrices in the relation satisfy the rank condition 
$$
   \rank (\a_n + \a_n^\vee)  = \rank (\a_n - \a_n^\vee) = n+1 \quad \hbox{and} \quad \rank \left [ \begin{matrix} \a_n \\ \a_{n}^\vee \end{matrix} \right]=n+2.
$$
\end{thm}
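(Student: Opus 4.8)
The plan is to deduce this complex Favard theorem from its real counterpart, Theorem~\ref{thm:nFavard}, by transporting the entire statement through the unitary matrices $L_n$ and the dictionaries \eqref{eq:Q=LP} and \eqref{3-term-coeff}. The engine is that $L_n$ is unitary with $L_nL_n^\tr=L_n^\tr L_n=J_{n+1}$; combining these gives $\overline{L_n}=L_nJ_{n+1}=J_{n+1}L_n$, so $J_{n+1}$ commutes with $L_n$ and complex conjugation interacts predictably with the $\vee$-operation. I would treat the two implications as mirror images of one computation: given (2) I set $\PP_n=L_n^*\QQ_n$ and read off candidate real recurrence coefficients, while given (1) I realify the functional; in both cases Theorem~\ref{thm:nFavard} is the hinge.

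First I would check the coefficient transfer. Writing $X_n:=L_n^*\a_nL_{n+1}$, a short computation using $L_n^*J_{n+1}=L_n^\tr$, $\overline{L_{n+1}}=L_{n+1}J_{n+2}=J_{n+2}L_{n+1}$ and $J_{n+1}\overline{\a_n}J_{n+2}=\a_n^\vee$ gives $\overline{X_n}=L_n^*\a_n^\vee L_{n+1}$. Hence $A_{n,1}:=\Re X_n=\tfrac12L_n^*(\a_n+\a_n^\vee)L_{n+1}$ and $A_{n,2}:=\Im X_n=\tfrac1{2i}L_n^*(\a_n-\a_n^\vee)L_{n+1}$ are real and recover \eqref{3-term-coeff}; the matrices $B_{n,i}$ come out the same way from $\b_n$. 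Using in addition $((\a^*)^\vee)^\vee=\a^*$, one finds that the lower coefficient $(\a_{n-1}^*)^\vee$ of \eqref{three-termC} transforms into $A_{n-1,1}^\tr+iA_{n-1,2}^\tr$, so that \eqref{three-termC} is exactly the combination $z=x+iy$ of the two real relations \eqref{three-term}. I would then transfer the rank conditions: since $L_n,L_{n+1}$ are invertible, $\rank A_{n,1}=\rank(\a_n+\a_n^\vee)$ and $\rank A_{n,2}=\rank(\a_n-\a_n^\vee)$, while factoring $\left[\begin{smallmatrix}A_{n,1}\\A_{n,2}\end{smallmatrix}\right]$ through the invertible block $\left[\begin{smallmatrix}I&I\\I&-I\end{smallmatrix}\right]$ and through $L_{n+1}$ shows $\rank\left[\begin{smallmatrix}A_{n,1}\\A_{n,2}\end{smallmatrix}\right]=\rank\left[\begin{smallmatrix}\a_n\\\a_n^\vee\end{smallmatrix}\right]$. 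Thus the complex rank conditions of (2) are equivalent to the real ones.

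With real coefficients and real rank conditions in hand, Theorem~\ref{thm:nFavard} supplies a positive definite $\CL$ on $\Pi^2$ making $\PP_n$ orthonormal; extending $\CL$ $\CC$-linearly to $\CL^\CC$ on $\Pi^2(\CC)$ keeps it positive definite because $f\overline f=(\Re f)^2+(\Im f)^2$, and then $\la\QQ_n,\QQ_m^\tr\ra^\CC=L_n\la\PP_n,\PP_m^\tr\ra L_m^*=\delta_{n,m}I$ yields (1). The reverse implication runs the same dictionary backwards: from (1) one passes to $\PP_n=L_n^*\QQ_n$, applies Theorem~\ref{thm:nFavard} to obtain the real three--term relations, and rewrites them through \eqref{3-term-coeff} and the orthonormal identity \eqref{eq:a-g2} to recover \eqref{three-termC} with lower coefficient $(\a_{n-1}^*)^\vee$ and the stated rank conditions.

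The step I expect to be the real obstacle is that this whole transfer presumes $\PP_n=L_n^*\QQ_n$ is genuinely real valued, equivalently that $\QQ_n$ satisfies the conjugation symmetry \eqref{eq:Qconjugate2}; without it the single $z$-recurrence does not split into the two real recurrences needed above. I would try to prove \eqref{eq:Qconjugate2} by induction on $n$: conjugating \eqref{three-termC} and using the hypothesis for $\QQ_n,\QQ_{n-1}$ shows that $\widetilde\QQ_{n+1}:=J_{n+2}\overline{\QQ_{n+1}}$ satisfies $\a_n^\vee\widetilde\QQ_{n+1}=\bar z\QQ_n-\b_n^\vee\QQ_n-\a_{n-1}^*\QQ_{n-1}$, which is the very $\bar z$-recurrence that $\QQ_{n+1}$ ought to obey. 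If one also supplies the companion relations (the $\a_n$-relation for $\widetilde\QQ_{n+1}$ and the $\a_n^\vee$-relation for $\QQ_{n+1}$), then $\left[\begin{smallmatrix}\a_n\\\a_n^\vee\end{smallmatrix}\right](\QQ_{n+1}-\widetilde\QQ_{n+1})=0$, and the full-column-rank condition $\rank\left[\begin{smallmatrix}\a_n\\\a_n^\vee\end{smallmatrix}\right]=n+2$ forces $\QQ_{n+1}=\widetilde\QQ_{n+1}$. The genuinely delicate point is producing these companion relations, which I would extract by matching the degree-$(n+1)$ leading parts of the recurrences against the (invertible) leading-coefficient matrices of the $\QQ_m$; should this inductive argument resist, the clean fallback—consistent with the rest of the paper, where \eqref{eq:Qconjugate2} is standing throughout—is to read statement (1) as producing a basis that satisfies \eqref{eq:Qconjugate2}, after which the transfer above is immediate.
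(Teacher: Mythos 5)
Your core transfer argument is essentially the paper's own proof: the paper's entire argument consists of the coefficient dictionary (your $X_n=L_n^*\a_nL_{n+1}$ is the paper's \eqref{3-term-coeff2}) plus the observation that invertibility of the $L_n$'s and of the block matrix $\frac12\left[\begin{smallmatrix} I & I\\ iI & -iI\end{smallmatrix}\right]$ transfers the rank conditions, after which Theorem~\ref{thm:nFavard} does all the work. Your realification of $\CL$ via $f\bar f=(\Re f)^2+(\Im f)^2$ and the explicit rank bookkeeping fill in steps the paper leaves implicit.

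The substantive point is the obstacle you flagged at the end, and there your ``fallback'' is not merely the cleaner option --- it is the only correct one, because the induction you sketch cannot be completed: statement (2) genuinely does not imply \eqref{eq:Qconjugate2}, so the missing ``companion relations'' cannot be produced. Concretely, take $\QQ_0=1$ and $\QQ_1=(iz,\bar z)^\tr$. The $n=0$ relation forces $\a_0=(-i,0)$ and $\b_0=0$, hence the $n=1$ relation must have lower coefficient $(\a_0^*)^\vee=(0,-i)^\tr$; it is satisfied by choosing $\b_1=0$, $\QQ_2=(z^2,\, z\bar z+i,\, \bar z^2)^\tr$ and $\a_1=\left[\begin{smallmatrix} i&0&0\\ 0&1&0\end{smallmatrix}\right]$, and one checks the rank conditions hold; continuing in this way (each $\QQ_{n+2}$ absorbs the required lower-degree terms, $\a_{n+1}=[\,I_{n+2}\;\;0\,]$) yields a sequence satisfying all of (2) but violating \eqref{eq:Qconjugate2}. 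Moreover (1) fails for this sequence: if a positive definite $\CL$ made it an orthonormal basis, uniqueness of expansion in a basis would force the $\QQ_0$-coefficient of $zQ_{1,1}=z\bar z$ to equal $\la z\bar z,1\ra=\CL(|z|^2)>0$, not $-i$. (Symmetrically, orthonormalizing $(iz,\bar z)^\tr$ for the Gaussian weight shows that (1), read literally for an arbitrary orthonormal basis, does not imply (2) either.) So the theorem is true only when \eqref{eq:Qconjugate2} is taken as a standing hypothesis on $\{\QQ_n\}$, exactly as in the three-term relation theorem earlier in the section; this is what your fallback does, and it is also what the paper's proof silently assumes when it treats $\PP_n=L_n^*\QQ_n$ as a real sequence to which Theorem~\ref{thm:nFavard} applies. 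Stated with that hypothesis, your proof is complete and correct.
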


\begin{proof}
From \eqref{3-term-coeff}, we immediately deduce that 
$$
  A_{n,1} = \frac 12 (L_n \a_n L_{n+1}^* + \overline{L_n \a_nL_{n+1}^*})\quad\hbox{and}\quad
    A_{n,2} = \frac 1{2i} (L_n \a_n L_{n+1}^* - \overline{L_n \a_nL_{n+1}^*}).
$$
Since $L_n L_n^\tr = J_n$, it follows that 
\begin{equation} \label{3-term-coeff2}
   L_n^* A_{n,1} L_{n+1} = \frac 12 ( \a_n + \a_n^\vee) \quad\hbox{and}\quad 
   L_n^* A_{n,2} L_{n+1} = \frac 1{2i}( \a_n - \a_n^\vee), 
\end{equation}
which also lead to
$$
  \left [\begin{matrix} L_n &\bigcirc \\ \bigcirc& L_n \end{matrix} \right]
   \left [ \begin{matrix} A_{n,1} \\ A_{n,2} \end{matrix} \right]
   =  \frac 12 \left[ \begin{matrix} I_n & I_n \\ i I_n & - i I_n \end{matrix} \right]
   \left [ \begin{matrix} \a_n \\ \a_{n}^\vee \end{matrix} \right].   
$$
These relations allow us to translate the rank conditions on the matrices $A_{n,i}$ in Theorem \ref{thm:nFavard} to
matrices $\a_n$ and $\a_n^\vee$. 
\end{proof}

The three--term relations for $\PP_n$ satisfy additional relations, called commuting conditions, which comes from
the fact that the associated block Jacobi matrices $\CJ_i$ commute, where 
\begin{equation*} 
\CJ_i = \left[ \begin{matrix} B_{0,i}&A_{0,i}&&\bigcirc\cr
A_{0,i}^\tr&B_{1,i}&A_{1,i}&&\cr
&A_{1,i}^\tr&B_{2,i}&\ddots\cr
\bigcirc&&\ddots&\ddots \end{matrix}
\right],\qquad i =1, 2.
\end{equation*} 
These commuting conditions translate to conditions on $\a_n$ and $\b_n$. Without getting into details, 
we record them below. 

\begin{prop}
For orthonormal polynomials $\QQ_n$, the coefficients of the three--term relation \eqref{3-termC} satisfy
\begin{align*}
    \a_n \a_{n+1}^\vee & = \a_n^\vee \a_{n+1}, \\
    \a_n \b_{n+1}^\vee + \b_n \a_n^\vee & = \b_n^\vee \a_{n} + \a_n^\vee \b_{n+1}, \\
    \a_n \a_n^* + \b_n \b_n^* + (\a_{n-1}^*)^\vee \a_{n-1}^\vee & = \a_{n-1}^\vee (\a_{n-1}^\vee)^*  + \b_n^* \b_n+ \a_{n}^* \a_{n}. 
\end{align*}
\end{prop}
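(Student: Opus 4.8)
The plan is to exploit the fact that multiplication by $z$ and by $\bar z$ are commuting operators on $L^2(W)$, so that $z(\bar z \QQ_n) = \bar z (z \QQ_n)$ identically; this is the complex counterpart of the commuting block Jacobi matrices $\CJ_1\CJ_2 = \CJ_2\CJ_1$ that produce the real commuting conditions. Concretely, I would substitute \eqref{3-termC} into $\bar z(z\QQ_n)$ and \eqref{3-termC2} into $z(\bar z\QQ_n)$, expand each a second time using the same two relations, and then equate the coefficients of the orthonormal basis vectors $\QQ_{n+2}, \QQ_{n+1}, \QQ_n$ on the two sides. Since the two families of coefficients $\{\a_n,\b_n,\g_{n-1}\}$ and $\{\a_n^\vee,\b_n^\vee,\g_{n-1}^\vee\}$ govern the $z$- and $\bar z$-expansions respectively, each matched level produces one matrix identity.

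Matching the coefficient of $\QQ_{n+2}$ involves only the leading blocks and gives $\a_n^\vee \a_{n+1} = \a_n \a_{n+1}^\vee$, the first identity. Matching the coefficient of $\QQ_{n+1}$ brings in $\b$ at the adjacent level and yields $\a_n^\vee\b_{n+1} + \b_n^\vee\a_n = \a_n\b_{n+1}^\vee + \b_n\a_n^\vee$, the second identity. Both are immediate once the two double expansions are written out, since nothing beyond collecting terms is required.

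The third identity, from the coefficient of $\QQ_n$, is the delicate one. The raw comparison produces a relation in $\a_n,\b_n,\g_{n-1}$ and their images under $\vee$, namely $\a_n^\vee\g_n + \b_n^\vee\b_n + \g_{n-1}^\vee\a_{n-1} = \a_n\g_n^\vee + \b_n\b_n^\vee + \g_{n-1}\a_{n-1}^\vee$, and two further inputs are needed to reduce it to the stated quadratic form in $\a_n$ and $\b_n$. First, I would eliminate the lower coefficients using the orthonormal relation $\g_{n-1} = (\a_{n-1}^*)^\vee$ from \eqref{eq:a-g2}. Second, I would establish the Hermitian symmetry $\b_n^\vee = \b_n^*$ for orthonormal $\QQ_n$; this follows from $\b_n = \la z\QQ_n, \QQ_n^\tr\ra_W$ (with $H_n = I_{n+1}$) together with \eqref{eq:Qconjugate2}, by exactly the manipulation used for \eqref{gamma-alpha}: insert $\QQ_n = J_{n+1}\overline{\QQ_n}$ and use $J_{n+1}^2 = I_{n+1}$. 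Throughout I would invoke the elementary rules $(AB)^\vee = A^\vee B^\vee$ and $(A^*)^\vee = (A^\vee)^*$, which follow directly from $M^\vee = J\overline{M}J$ and $J^\tr = J$, $J^2 = I$.

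The main obstacle is this third step: keeping the $\vee$ and $*$ operations straight and checking that the blocks entering each product have compatible sizes, so that after substituting the $\g$–$\a$ and $\b^\vee$–$\b^*$ relations the diagonal comparison collapses to the claimed identity. As a cross-check, and as the alternative route suggested by the remark preceding the proposition, the same three identities can be obtained by feeding the known real commuting conditions (the superdiagonal and diagonal blocks of $\CJ_1\CJ_2 = \CJ_2\CJ_1$) through the substitution \eqref{3-term-coeff}, using $L_nL_n^* = I_{n+1}$ and $L_nL_n^\tr = J_{n+1}$ to convert products of the real matrices $A_{n,i},B_{n,i}$ into products of $\a_n,\b_n$ and their conjugates; agreement of the two derivations would confirm the result and pin down the correct placement of the $\vee$'s.
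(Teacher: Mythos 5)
Your proposal is correct, and its primary route is genuinely different from the paper's: the paper offers no actual proof, saying only that the identities ``translate'' from the commuting conditions of the real block Jacobi matrices $\CJ_1\CJ_2=\CJ_2\CJ_1$ through the change of basis \eqref{3-term-coeff} (which is exactly your cross-check route). Your main argument---expanding $\bar z(z\QQ_n)$ by \eqref{3-termC} and then \eqref{3-termC2}, expanding $z(\bar z\QQ_n)$ in the opposite order, and matching coefficients of $\QQ_{n+2}$, $\QQ_{n+1}$, $\QQ_n$---is self-contained and stays entirely in the complex setting, avoiding any untangling of the $L_n$-conjugations; the matching step is legitimate either by uniqueness of the expansion of a polynomial in the basis $\{Q_{k,m}\}$ or by taking $\la\,\cdot\,,\QQ_m^\tr\ra_W^\CC$ and using orthonormality. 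The two auxiliary facts you need are available exactly as you say: $\g_{n-1}=(\a_{n-1}^*)^\vee$ is \eqref{eq:a-g2}, and $\b_n^*=\b_n^\vee$ follows from $\b_n=\la z\QQ_n,\QQ_n^\tr\ra_W^\CC$ by the same conjugation trick that proves \eqref{gamma-alpha} (equivalently, $\b_n^*=\la\bar z\QQ_n,\QQ_n^\tr\ra_W^\CC=\b_n^\vee$ by \eqref{3-termC2}). What your approach buys is a transparent proof in which each identity is visibly the coefficient of one level of the expansion; what the paper's implicit approach buys is that nothing new needs to be proved once \eqref{3-term-coeff} and the real commuting conditions are accepted.

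One point you must flag when writing this up: after the substitutions, your diagonal comparison does not collapse to the third identity as printed, but to
\begin{equation*}
\a_n\a_n^* + \b_n\b_n^* + (\a_{n-1}^*)^\vee\,\a_{n-1}^\vee
= \a_n^\vee(\a_n^\vee)^* + \b_n^*\b_n + \a_{n-1}^*\a_{n-1},
\end{equation*}
that is, with $\a_n^\vee(\a_n^\vee)^*$ and $\a_{n-1}^*\a_{n-1}$ on the right in place of the paper's $\a_{n-1}^\vee(\a_{n-1}^\vee)^*$ and $\a_n^*\a_n$. This is not a defect of your argument: the printed version cannot be correct, since $\a_{n-1}^\vee(\a_{n-1}^\vee)^*$ has size $n\times n$ and $\a_n^*\a_n$ has size $(n+2)\times(n+2)$, while the left-hand side is $(n+1)\times(n+1)$; the indices on the two extreme right-hand terms were evidently transposed in the paper. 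Your derivation yields the dimensionally consistent form, which can be written symmetrically, using $(AB)^\vee=A^\vee B^\vee$ and $(A^*)^\vee=(A^\vee)^*$, as
\begin{equation*}
\a_n\a_n^* + \b_n\b_n^* + (\a_{n-1}^*\a_{n-1})^\vee
= (\a_n\a_n^*)^\vee + \b_n^*\b_n + \a_{n-1}^*\a_{n-1},
\end{equation*}
and this is what the proposition should state.
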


Another result worth mentioning is the Christoffel-Darboux formula stated in the following: 

\begin{prop}
For orthonormal polynomials $\QQ_n$, we have
$$
   \Kb_n^\CC(z, \zeta) = \frac{\QQ_{n+1}(z,\bar z)^* \a_n^\tr J_{n+1} \QQ_n(\zeta, \bar \zeta) - 
      \QQ_{n+1}(\zeta,\bar \zeta)^* \a_n^\tr J_{n+1}  \QQ_n(z, \bar z)}{z- \zeta} .
$$
\end{prop}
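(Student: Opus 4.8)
The goal is a Christoffel–Darboux-type formula for the complex reproducing kernel $\Kb_n^\CC$. The plan is to mimic the classical telescoping argument for Christoffel–Darboux, using the complex three-term relation \eqref{3-termC} together with its conjugate form \eqref{3-termC2}. First I would recall that for orthonormal $\QQ_m$ the kernel $\Pb_m^\CC$ is given by \eqref{kernelPC}, namely $\Pb_m^\CC(z,\zeta)=\QQ_m(z,\bar z)^\tr\overline{\QQ_m(\zeta,\bar\zeta)}$, but since the basis satisfies \eqref{eq:Qconjugate2}, $\overline{\QQ_m(\zeta,\bar\zeta)}=J_{m+1}\QQ_m(\zeta,\bar\zeta)$, so I can rewrite everything using $\QQ_m(\zeta,\bar\zeta)^*$ and $J_{m+1}$ factors, matching the form in the statement where $\QQ_{n+1}(z,\bar z)^*$ and $J_{n+1}$ appear.

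The key computation is to form the scalar quantity $(z-\zeta)\Pb_m^\CC(z,\zeta)$ and express it as a difference that telescopes in $m$. Concretely I would compute $z\,\Pb_m^\CC$ using the three-term relation \eqref{three-termC} to replace $z\QQ_m(z,\bar z)$, and separately handle the $\zeta$-dependence, taking care that the recurrence acts on the $z$-variable while the complex-conjugated factor carries the $\zeta$-variable. Because the relation \eqref{three-termC} for orthonormal polynomials has lower coefficient $(\a_{m-1}^*)^\vee$, the cross terms between consecutive degrees should cancel, leaving only the boundary terms at level $n$ involving $\a_n$. Summing $\Pb_m^\CC$ over $0\le m\le n$ as in \eqref{reprod-kernelC} then collapses the telescope, and the surviving term is exactly the numerator $\QQ_{n+1}(z,\bar z)^*\a_n^\tr J_{n+1}\QQ_n(\zeta,\bar\zeta)$ minus its $z\leftrightarrow\zeta$ swap, divided by $z-\zeta$.

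The cleanest route, which I would actually prefer, is to deduce the complex formula directly from the real Christoffel–Darboux formula \eqref{CDformula} rather than re-deriving the telescope. By part (3) of the earlier theorem, $\Kb_n^\CC(z,\zeta)=\Kb_n((x,y),(u,v))$, and by \eqref{eq:Q=LP} we have $\PP_n=L_n^*\QQ_n$. The plan is to substitute $\PP_m=L_m^*\QQ_m$ into \eqref{CDformula}, use the relation \eqref{3-term-coeff2} expressing $L_n^*A_{n,i}L_{n+1}$ in terms of $\a_n$ and $\a_n^\vee$, and then combine the $i=1$ and $i=2$ instances. Since $z-\zeta=(x-u)+i(v-u\text{-shift})$, one forms the complex combination $x_i-y_i$ appropriately: the $i=1$ formula gives the $(x-u)$-part and $i=2$ the $(y-v)$-part, and adding the first plus $i$ times the second produces denominator $z-\zeta$ and numerator governed by $\a_n$ alone, with $\a_n^\vee$ dropping out. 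The main obstacle I anticipate is bookkeeping with the unitary $L_m$ and the operators $J_{m+1}$ and $(\cdot)^\vee$: one must track carefully where transpose versus conjugate-transpose appears, use $L_mL_m^\tr=J_{m+1}$ and $L_m^\tr\overline{L_m}=I_{m+1}$ at the right moments, and verify that the $\a_n^\vee$ contributions from the two coordinate directions cancel while the $\a_n$ contributions reinforce. Once that algebra is organized, the stated formula follows with no further estimates.
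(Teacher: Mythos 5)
Your preferred route is exactly the paper's proof: it writes $(z-\zeta)\Kb_n^\CC(z,\zeta)=(x-u)\Kb_n+i(y-v)\Kb_n$, applies the real Christoffel--Darboux formula \eqref{CDformula} for $i=1$ and $i=2$, and substitutes $\PP_m=L_m^*\QQ_m$ together with $A_{n,1}+iA_{n,2}=L_n^*\a_n L_{n+1}$ (equivalent to your use of \eqref{3-term-coeff2}, under which the $\a_n^\vee$ contributions cancel) and finally $L_nL_n^\tr=J_{n+1}$. The telescoping argument you sketch first would be a genuinely different derivation, but since you explicitly defer to the reduction to the real case, your proposal coincides with the paper's argument.
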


\begin{proof}
Recall that $\Kb_n^\CC(x+iy,u+iv) = K_n((x,y), (u,v))$. By \eqref{CDformula} and \eqref{3-term-coeff}, 
\begin{align*}
  (z-\zeta) \Kb_n^\CC(z,\zeta) & = (x -u) K_n((x,y), (u,v))+ i (y-v)K_n((x,y), (u,v)) \\
   & = (L_n^* \a_n L_{n+1} \PP_{n+1}(x,y))^\tr \PP_n(u,v) -  (L_n^* \a_n L_{n+1} \PP_{n+1}(u,v))^\tr \PP_n(x,y) \\
   & =  (L_n^* \a_n \QQ_{n+1}(z,\bar z))^\tr L_n^* \QQ_n(\zeta,\bar \zeta) - 
           (L_n^* \a_n  \QQ_{n+1}(\zeta,\bar \zeta))^\tr L_n^* \QQ_n(z,\bar z),
\end{align*}
which simplifies, since $L_n L_n^\tr = J_{n+1}$, to the desired identity.
\end{proof}

Our last result in this section is about common zeros of $\QQ_n$. We call $z$ a common zeros of $\QQ_n$ if
every component of $\QQ$ vanishes at $z$, that is, $Q_{k,n}(z,\bar z ) = 0$ for $0 \le k \le n$. For $\PP_n$, it
is known that it has at most $\dim \Pi_{n-1}^2 = \binom{n+1}2$ common zeros and it has $\dim \Pi_{n-1}^2$
zeros if and only if $A_{n-1,1} A_{n-1,2}^\tr = A_{n-1,1}^\tr A_{n-1,2}^\tr$. We can convert these results to
complex orthogonal polynomials. 

\begin{thm} \label{thm:zeros}
Assume $\QQ_n$ consists of an orthonormal basis of $\CV_n^d(W,\CC)$. Then 
\begin{enumerate}[   1.]
\item $\QQ_n$ has at most $\dim \Pi_{n-1}^2$ common zeros. 
\item $\QQ_n$ has $\dim \Pi_{n-1}^2$ zeros if and only if 
\begin{equation}\label{max-zero-cond}
    \a_{n-1} \a_{n-1}^* J_{n+1} = ( \a_{n-1} \a_{n-1}^* J_{n+1} )^\tr. 
\end{equation}
\end{enumerate}
\end{thm}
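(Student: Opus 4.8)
\textbf{Proof proposal for Theorem~\ref{thm:zeros}.}
The plan is to transfer the known results about common zeros of the real orthonormal polynomials $\PP_n$ to the complex orthonormal polynomials $\QQ_n$ via the unitary change of basis $\QQ_n = L_n \PP_n$ from \eqref{eq:Q=LP}. The crucial observation is that because $L_n$ is a fixed nonsingular matrix, a point is a common zero of every component of $\QQ_n$ if and only if it is a common zero of every component of $\PP_n$. Indeed, $\QQ_n(z,\bar z) = L_n \PP_n(x,y)$ with $z = x+iy$, so $\QQ_n(z,\bar z) = 0$ as a vector precisely when $\PP_n(x,y) = 0$ as a vector, since $L_n$ is invertible. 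Thus the common zeros of $\QQ_n$, viewed in $\CC$ via $z = x+iy$, are exactly the common zeros of $\PP_n$ viewed in $\RR^2$. This immediately gives part~(1): the real theory guarantees that $\PP_n$ has at most $\dim \Pi_{n-1}^2 = \binom{n+1}{2}$ common zeros, and the bijection $z \leftrightarrow (x,y)$ preserves their number.

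For part~(2), I would start from the known real criterion: $\PP_n$ attains the maximal number $\dim \Pi_{n-1}^2$ of common zeros if and only if the commuting condition $A_{n-1,1} A_{n-1,2}^\tr = A_{n-1,2} A_{n-1,1}^\tr$ holds (this is the genuine symmetry condition; the formula displayed in the running text appears to contain a typo). By the bijection just established, $\QQ_n$ has $\dim \Pi_{n-1}^2$ zeros if and only if this same relation on the $A_{n-1,i}$ holds, so the entire task reduces to re-expressing $A_{n-1,1}A_{n-1,2}^\tr = A_{n-1,2}A_{n-1,1}^\tr$ in terms of $\a_{n-1}$. The main computational step is to substitute the identities from \eqref{3-term-coeff2},
\begin{equation*}
   L_{n-1}^* A_{n-1,1} L_{n} = \tfrac 12 (\a_{n-1} + \a_{n-1}^\vee), \qquad
   L_{n-1}^* A_{n-1,2} L_{n} = \tfrac 1{2i} (\a_{n-1} - \a_{n-1}^\vee),
\end{equation*}
and solve for $A_{n-1,1}, A_{n-1,2}$ in terms of $\a_{n-1}$ and $\a_{n-1}^\vee$, using $L_n L_n^* = I$ and $L_n L_n^\tr = J_{n+1}$.

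I would then form the product $A_{n-1,1} A_{n-1,2}^\tr$, carefully tracking transposes: note that $A_{n-1,i}$ is real, so $A_{n-1,i}^\tr = A_{n-1,i}^*$, and I would use the relations $(\a^\vee)^\vee = \a$ and $(M^\vee)^\tr = (M^\tr)^\vee$ together with $L_n^\tr \overline{L_n} = J_{n+1}$ to push all the $L$-factors to the outside. After the dust settles, the symmetry condition $A_{n-1,1}A_{n-1,2}^\tr - A_{n-1,2}A_{n-1,1}^\tr = 0$ should collapse, via the $\tfrac1{4i}$ cross terms, into a statement that $\a_{n-1}\a_{n-1}^* J_{n+1}$ equals its own transpose, namely \eqref{max-zero-cond}; the terms involving $\a_{n-1}\a_{n-1}^\tr$ or $\a_{n-1}^\vee(\a_{n-1}^\vee)^*$ should cancel in pairs. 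The hard part will be bookkeeping the conjugate-transpose-versus-$\vee$ operations without sign or ordering errors, since $\vee$ interleaves conjugation with the backward identity $J$; the safest route is to reduce everything to $\a_{n-1}$ and $\overline{\a_{n-1}}$ with explicit $J_n, J_{n+1}$ factors before claiming the final simplification. Once the algebraic identity is verified, part~(2) follows since $L_n$ is unitary and hence the rewriting is an equivalence.
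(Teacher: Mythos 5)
Your proposal matches the paper's proof essentially step for step: part (1) is obtained exactly as in the paper from the invertibility of $L_n$ in $\QQ_n = L_n \PP_n$, and part (2) by substituting the identities \eqref{3-term-coeff2} into the real maximal-common-zero criterion and simplifying the resulting matrix identity down to \eqref{max-zero-cond}. Your reading of the real criterion as the symmetry condition $A_{n-1,1}A_{n-1,2}^\tr = A_{n-1,2}A_{n-1,1}^\tr$ (correcting the dimensionally inconsistent formula in the paper's running text) is precisely what the paper's own computation of $4i\,A_{n-1,1}A_{n-1,2}^\tr$ uses, so the two arguments coincide.
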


\begin{proof}
From $\QQ_n = L_n \PP_n$ it follows that $z = x+iy$ is a zero of $\QQ_n$ if and only if $(x,y)$ is a zero of $\PP_n$,
so that the results follow from that of $\PP_n$. By \eqref{3-term-coeff2}, 
\begin{align*}
  4  i A_{n-1,1}A_{n-1,2}^\tr = &\, L_{n-1}^* (\a_{n-1} + \a_{n-1}^\vee)L_n L_n^\tr  (\a_{n-1} - \a_{n-1}^\vee)^\tr \overline{L_n} \\
    = &\, L_{n-1}^* (\a_{n-1} + \a_{n-1}^\vee)J_{n+1}  (\a_{n-1} - \a_{n-1}^\vee)^\tr \overline{L_n},
\end{align*}
from which it follows that $A_{n-1,1} A_{n-1,2}^\tr = A_{n-1,1}^\tr A_{n-1,2}^\tr$ is equivalent to 
$$
  (\a_{n-1} + \a_{n-1}^\vee)J_{n+1}  (\a_{n-1}^\tr - (\a_{n-1}^\vee)^\tr) 
     =   (\a_{n-1} - \a_{n-1}^\vee)J_{n+1}  (\a_{n-1}^\tr + (\a_{n-1}^\vee)^\tr ),
$$
which simplifies to \eqref{max-zero-cond}. 
\end{proof}

The existence of maximal number of common zeros of $\QQ_n$ implies the existence of a Gaussian cubature rule of
degree $2n-1$, which is important for numerical analysis and several other topics. 

\begin{prop}
Let $\QQ_n$ consist of an orthonormal basis of $\CV_n^d(W_\mu)$. Then $z \in \CC$ is a common zero of
$\QQ_n$ if $z$ is an eigenvalue of the matrix
$$
  \CJ_n := \left[ \begin{matrix} \b_0& \a_0 &&&\bigcirc\cr
       (\a_0^\vee)^* &\b_1 & \a_1 &&\cr  &\ddots&\ddots&\ddots&\cr
     &&  (\a_{n-3}^\vee)^* & \b_{n-2}& \a_{n-2}\cr
     \bigcirc&&&  (\a_{n-2}^\vee)^*  & \b_{n-1}\end{matrix} \right]
$$ 
with eigenvector $\xi_z: = (\QQ_0(z,\bar z)^\tr, \QQ_1(z,\bar z)^\tr, \ldots, \QQ_{n-1}(z,\bar z)^\tr)^\tr$. 
\end{prop}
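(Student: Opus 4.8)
The plan is to read the eigenvalue equation $\CJ_n \xi_z = z\xi_z$ block by block and reduce it to a linear condition on $\QQ_n(z,\bar z)$. Because $\CJ_n$ is block tridiagonal, its $m$-th block row applied to $\xi_z$ equals $(\a_{m-1}^\vee)^* \QQ_{m-1} + \b_m\QQ_m + \a_m\QQ_{m+1}$ for $0 \le m \le n-1$, with the convention $\QQ_{-1} = 0$; the only deviation from the three-term relation \eqref{3-termC} is that the truncation removes the term $\a_{n-1}\QQ_n$ from the last block row. Noting that $(\a_{m-1}^*)^\vee = (\a_{m-1}^\vee)^*$ (the operations $(\cdot)^*$ and $(\cdot)^\vee$ commute), these block rows are exactly the right-hand sides of the orthonormal three-term relation \eqref{three-termC}. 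Hence the rows $0 \le m \le n-2$ reproduce $z\QQ_m$ automatically, while the last row yields $z\QQ_{n-1} - \a_{n-1}\QQ_n$. Therefore $\CJ_n\xi_z = z\xi_z$ is equivalent to the single identity $\a_{n-1}\QQ_n(z,\bar z) = 0$.

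The main obstacle is that $\a_{n-1} \in \CM^\CC(n, n+1)$ has a nontrivial kernel, so $\a_{n-1}\QQ_n(z,\bar z) = 0$ does not by itself force $\QQ_n(z,\bar z) = 0$; a second, independent constraint is needed. The key idea is to invoke the conjugation symmetry \eqref{eq:Qconjugate2}, $\QQ_n = J_{n+1}\overline{\QQ_n}$. Substituting it gives $\a_{n-1} J_{n+1}\overline{\QQ_n} = 0$, and taking complex conjugates (using that $J_{n+1}$ is real) yields $\overline{\a_{n-1}} J_{n+1}\QQ_n = 0$. Since $\a_{n-1}^\vee = J_n\overline{\a_{n-1}} J_{n+1}$ and $J_n$ is invertible, this is precisely $\a_{n-1}^\vee \QQ_n(z,\bar z) = 0$. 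Thus both $\a_{n-1}\QQ_n(z,\bar z) = 0$ and $\a_{n-1}^\vee\QQ_n(z,\bar z) = 0$ hold.

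To conclude, I would stack the two conditions as $\left[\begin{matrix}\a_{n-1}\\ \a_{n-1}^\vee\end{matrix}\right]\QQ_n(z,\bar z) = 0$ and apply the rank hypothesis of \thmref{thm:FavardC} at level $n-1$, namely $\rank\left[\begin{matrix}\a_{n-1}\\ \a_{n-1}^\vee\end{matrix}\right] = n+1$. As this stacked matrix has exactly $n+1$ columns, it has full column rank and trivial kernel, forcing $\QQ_n(z,\bar z) = 0$; that is, $z$ is a common zero of $\QQ_n$, as claimed.

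Alternatively, one may route through the real theory: combining $\QQ_m = L_m\PP_m$ with \eqref{3-term-coeff} and $L_mL_m^\tr = J_{m+1}$ shows $\CJ_n = \CL(\CJ_{n,1} + i\CJ_{n,2})\CL^*$ for $\CL = \diag(L_0,\dots,L_{n-1})$, so $\CJ_n\xi_z = z\xi_z$ becomes $(\CJ_{n,1}+i\CJ_{n,2})\eta = (x+iy)\eta$ with $\eta = (\PP_0^\tr,\dots,\PP_{n-1}^\tr)^\tr$ real; splitting into real and imaginary parts recovers the joint eigenvector equations $\CJ_{n,1}\eta = x\eta$ and $\CJ_{n,2}\eta = y\eta$, and hence the known real characterization of common zeros of $\PP_n$. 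I would present the direct complex argument as the main proof, since it sidesteps re-deriving this block-conjugation identity.
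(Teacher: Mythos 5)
Your argument is correct, but it proves the opposite implication from the one the paper proves, so the two proofs are genuinely different. The paper's own proof is a one-line substitution in the easy direction: if $z$ is a common zero, then putting $\QQ_n(z,\bar z)=0$ into \eqref{three-termC} at level $n-1$ turns the truncated last block row of $\CJ_n\xi_z$ into $z\QQ_{n-1}(z,\bar z)$, giving $\CJ_n\xi_z=z\xi_z$; indeed the paper's follow-up paragraph, which calls the eigenvalue-to-zero implication the ``inverse'' of the proposition, shows the intended reading of the statement is ``only if''. You instead prove the statement as literally worded --- eigenvalue with eigenvector $\xi_z$ implies common zero --- which is the harder converse and really does need your two extra ingredients: the symmetry \eqref{eq:Qconjugate2} to upgrade $\a_{n-1}\QQ_n(z,\bar z)=0$ to $\a_{n-1}^\vee\QQ_n(z,\bar z)=0$, and the full-column-rank condition on the matrix obtained by stacking $\a_{n-1}$ over $\a_{n-1}^\vee$, which is available from Theorem \ref{thm:FavardC} because orthonormality with respect to $W_\mu$ gives its statement (1), hence its statement (2). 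Since your reduction of $\CJ_n\xi_z=z\xi_z$ to the single condition $\a_{n-1}\QQ_n(z,\bar z)=0$ contains the paper's direction as the trivial half, your argument in fact establishes the equivalence, and it sharpens the paper's closing remark into an if-and-only-if for eigenvectors of the special form $\xi_z$. One caveat, shared with the paper's proof but worth making explicit: $\QQ_n$ must be assumed to satisfy \eqref{eq:Qconjugate2}, since otherwise the subdiagonal blocks $(\a_j^\vee)^*$ of $\CJ_n$ are not the correct lower coefficients $\g_j$ of \eqref{3-termC} and the proposition is not even well posed. Your alternative route through the real block Jacobi matrices is also sound, but redundant next to the direct complex argument.
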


\begin{proof}
If $z$ is a common zero of $\QQ_n(z,\bar z)$, then the three--term relation that involves $\QQ_n(z, \bar z)$ becomes
$$
   (\a_{n-2}^\vee)^* \QQ_{n-2}(z,\bar z) + \b_{n-1} \QQ_{n-1}(z,\bar z) = z \QQ_{n-1}(z,\bar z),
$$
which, together with \eqref{three-termC} for $k =0,1,\ldots, n-2$ shows that $\CJ_n \xi_z = z \xi_z$, so that $z$ is
an eigenvalue of $\CJ_n$. 
\end{proof}

One natural question is if the inverse of the above proposition holds;  that is, if every eigenvalue of $\CJ_n$ is a 
zero of $\QQ_n$. The answer is no and the reason is that if $z$ is an eigenvalue of $\CJ_n$, then $\bar z$ is also
an eigenvalue of $\CJ_n$ with an eigenvector $(\QQ_0(z,\bar z)^\tr, J_2 \QQ_1(z,\bar z)^\tr, \ldots, J_n \QQ_{n-1}(z,\bar z)^\tr)^\tr$,
as can be seen by \eqref{3-termC2}, \eqref{eq:a-g2} and \eqref{eq:Qconjugate2}. As a result, we see that if $\lambda$ 
is an eigenvalue of $\CJ_n$ with eigenvector  $\xi = (\xi_0, \xi_1^\tr, \ldots, \xi_{n-1}^\tr)^\tr$, where $\xi_j \in \CC^{j+1}$,
then $\lambda$ is a common zero of $\QQ_n$ only if $\overline{\xi_j} = J_{j+1} \xi_j$ for $j =1,2,\ldots, n-1$. 

\medskip\noindent
{\bf Example 5.1.} {\it Hermite polynomials}. Let $H_{k,j}$ be the complex Hermite polynomials in Example 3.1. The three-term
relation of these polynomials is given in \eqref{HermiteC-4}. Let $Q_{k,n} (z,\bar z) = H_{k,n-k}(z, \bar z)/\sqrt{k!(n-k)!}$. By
\eqref{HermiteC-5}, $\{Q_{k,n}: 0 \le k \le n\}$ is an orthonormal basis of $\CV_n^d(W_H)$ for which the three-term relation 
\eqref{three-termC} takes the form
$$ 
   z \QQ_n = \left[ \begin{matrix} 0 & 1 & & & \bigcirc\\ 0 & & \sqrt{2} && \\ \vdots &\bigcirc & & \ddots & \\
       0 & & & & \sqrt{n+1} \end{matrix} \right]\QQ_{n+1}  + 
   \left[ \begin{matrix} \sqrt{n} & &   \bigcirc\\  & \ddots  & \\ \bigcirc  & & 1 \\  0 & \ldots &  0 
         \end{matrix} \right]\QQ_{n-1}.  
$$ 
Since $H_n$ is an identity matrix, the relation \eqref{gamma-alpha} clearly holds.  \qed 

\medskip

\medskip\noindent
{\bf Example 5.2.} {\it Disk polynomials}. Let $H_{k,j}$ be the complex Hermite polynomials defined in Example 3.2. 
The three-term relation of these polynomials is given in \eqref{disk-poly3}. Let $Q_{k,n}(z,\bar z)  = P_{k,n-k}^\l(z, \bar z)/
\sqrt{h_{k,n-k}^\lambda}$.  By \eqref{disk-poly4}, $\{Q_{k,n}: 0 \le k \le n\}$ is an orthonormal basis of $\CV_n^d(W_\l)$
for which \eqref{disk-poly3} can be rewritten as
$$
   z Q_{k,n}(z,\bar z) = a_k^n Q_{k+1,n}(z,\bar z) + a_{n-k-1}^{n-1} Q_{k, n-1}(z,\bar z), 
$$
where 
$$
       a_k^n: = \sqrt{\frac{(\l+k+1)(k+1)}{(\l+n+1)(\l+n+2)}}, \qquad 0 \le k \le n. 
$$
Putting In matrix form, the relation takes the form
$$ 
   z \QQ_n = \left[ \begin{matrix} 0 & a_0^n  & & \bigcirc\\ \vdots &\bigcirc &  \ddots & \\
       0 &  & & a_n^n \end{matrix} \right] \QQ_{n+1}  + 
   \left[ \begin{matrix} a_{n-1}^{n-1} & &   \bigcirc\\  & \ddots  & \\ \bigcirc  & & a_0^{n-1} \\  0 & \ldots &  0 
         \end{matrix} \right]\QQ_{n-1}.  
$$ 
which is the three-term relation \eqref{three-termC}.  \qed

\medskip

In both of the above examples, the matrix $\b_n = 0$ since the weight functions are centrally symmetric. Notice that 
the condition \eqref{max-zero-cond} is not satisfied in both cases, so that the polynomials in $\QQ_n$ do not have 
maximal common zeros. In fact, in the centrally symmetric case, it is known that polynomials in $\PP_n$, since 
those in $\QQ_n$, do not have any common zero if $n$ is even and have a single common zero if $n$ is odd. 

\medskip\noindent
{\bf Example 5.3.} {\it Chebyshev polynomials on the region bounded by the deltoid.}  Both families, $T_k^n(z,\bar z)$ and $U_k^n(z,\bar z)$, satisfy
the three-term relations given by \eqref{recurT}. Each family is mutually orthogonal and the normalization constants of these polynomials 
are given in  (5.6) and (5.7) of \cite{LSX}. Let $\wt T_k^n(z,\bar z)$ and $\wt U_k^n(z,\bar z)$ denote the
orthonormal polynomials. Then the three-term relation \eqref{three-termC} becomes
$$
   3 z \TT_n = \left[ \begin{matrix} 1 & & &  \bigcirc & 0 \\    &  \ddots & & & \vdots \\
      &  & 1 &  & 0 \\
    \\ \bigcirc  &  & &   \sqrt{2} & 0\end{matrix} \right] \TT_{n+1}  + \b_n
     \TT_n
   + \left[ \begin{matrix} 0 & 0 &\cdots & 0 \\
           \sqrt{2} & & &  \bigcirc\\ & 1 && \\ & & \ddots  & \\ \bigcirc  & & & 1
         \end{matrix} \right]\TT_{n-1},
$$
where $\b_n =   \diag\{\sqrt{2}, 1, \ldots, 1,\sqrt{2} \}$ is a diagonal matrix, and 
$$
  3 z \UU_n =  \left[ \begin{matrix} I_n&0 \end{matrix}\right] \UU_{n+1} +  \left[ \begin{matrix} 0  & I_n \\
     0 & 0 \end{matrix} \right] \UU_n +  \left[ \begin{matrix} 0 \\ I_n \end{matrix} \right] \UU_{n-1}.
$$
It follows that the condition \eqref{max-zero-cond} is satisfied for $\UU_n$, which shows that polynomials
in $\UU_n$ have maximal number of common zeros by Theorem \ref{thm:zeros}. This was first established 
in \cite{LSX} using the explicit formulas for $U_k^n$. The condition \eqref{max-zero-cond}, however,
is not satisfied for $\TT_n$, which shows that $\TT_n$ does not have maximal number of common zeros. This
gives the first proof of this fact, which was verified in \cite{LSX}, using the explicit formulas of $T_k^n$, only 
for small $n$.
\qed



\end{document}